\documentclass[a4paper,12pt,reqno]{amsart}
\usepackage{amscd,amssymb, amsmath}
\usepackage{amsfonts}
\usepackage{lmodern}
\usepackage{mathrsfs,latexsym,xcolor}
\usepackage[all,ps,cmtip]{xy}


\title{On general type varieties admitting  global holomorphic forms}
\author{Meng Chen, Zhi Jiang}

\address{\rm School of Mathematical Sciences \& Shanghai Center for Mathematical Sciences, Fudan University, Shanghai 200433, China}
\email{mchen@fudan.edu.cn}

\address{\rm Shanghai Center for Mathematical Sciences, Fudan University, Shanghai 200438, China}
\email{zhijiang@fudan.edu.cn}

\thanks{This project is supported by       NSFC for Innovative Research Groups (\#12121001) and National Key Research and Development Program of China (\#2020YFA0713200). The first author is supported by NSFC Programs (\#12071078, \#11731004). The second author is supported by NSFC programs (\#11871155, \#11731004) and the Natural Science Foundation of Shanghai (\#21ZR1404500)}


\newcommand{\bQ}{{\mathbb Q}}
\newcommand{\bP}{{\mathbb P}}

\newcommand\Pic{\text{\rm Pic}}
\newcommand\Vol{\text{\rm Vol}}
\newcommand\vol{\text{\rm vol}}

\newcommand\OO{{\mathcal{O}}}

\newcommand\CE{{\mathcal{E}}}
\newcommand\CJ{{\mathcal{J}}}
\newcommand\PC{{\widehat{C}}}

\newcommand\CF{{\mathcal{F}}}

\newcommand\CQ{{\mathcal{Q}}}

\newcommand\codim{{\text{\rm codim}}}

\newcommand\rank{{\text{\rm rank}}}

\newtheorem{thm}{Theorem}[section]
\newtheorem{lem}[thm]{Lemma}
\newtheorem{cor}[thm]{Corollary}
\newtheorem{prop}[thm]{Proposition}

\theoremstyle{definition}
\newtheorem{defn}[thm]{Definition}

\newtheorem{exmp}[thm]{Example}
\newtheorem{conj}[thm]{Conjecture}

\theoremstyle{remark}

\newtheorem{Rema}{\bf Remark}
\begin{document}
\begin{abstract}  For all nonsingular projective $n$-folds $V$ of general type, we prove the existence of Noether type inequalities in the following form:
$$\vol(V)\geq a_{n,k}h^0(\Omega_V^k)-b_{n,k}$$
where $0< k\leq n$, $a_{n,k}$ and $b_{n,k}$ are positive constants only depending on $n$ and $k$. As applications, we prove the minimal volume conjecture for $3$-folds of general type with $\chi(\OO)\neq 2,3$ and disclose a new type of lifting principles for the sequence of canonical stability indices for varieties of general type. Finally we prove a theorem about ``strong lifting principle'' on varieties $X$ of general type with $q>\dim(X)$.
\end{abstract}
\maketitle

\pagestyle{myheadings}
\markboth{\hfill M. Chen, Z. Jiang\hfill}{\hfill Projective varieties of general type with many global k-forms\hfill}
\numberwithin{equation}{section}

\section{\bf Introduction}\label{S1}

We work over an algebraically closed field of characteristic zero.
Given a nonsingular projective variety $V$ of general type, it has been of great importance to calculate the canonical stability index \begin{equation*}r_s(V):=\min\{m\in \mathbb Z\mid \Phi_{|kK_V|}\ \text{is birational for all}\ k\geq m\}.\end{equation*}
For any $n\in \mathbb{Z}_{>0}$, both the {\it $n$-th canonical stability index}
$$r_n:=\mathrm{sup}\{r_s(V)| V\ \text{is a smooth projective $ n$-fold of general type}\}$$
and {\it the $n$-th minimal volume}
$$v_n:=\mathrm{inf}\{\Vol(V)| \text{$V$ is a smooth projective  $n$-fold of general type}\}$$
are key global quantities in birational geometry.  It is known that $r_1=3$ and, by  Bombieri \cite{Bom73}, we have $r_2=5$. Iano-Fletcher's example in \cite{I-F} shows that $r_3\geq 27$, $v_3\leq \frac{1}{420}$ and that, by Chen-Chen \cite{EXPI, EXPII,EXPIII} and Chen \cite{Chen18}, $r_3\leq 57$ and $v_3\geq \frac{1}{1680}$. For any $n\geq 4$, Hacon-McKernan \cite{H-M06}, Takayama \cite{Tak06} and Tsuji \cite{T07} showed that $r_n$ is finite. For $n\geq 4$, those interesting examples found by Esser-Totaro-Wang \cite{ETW} show that $r_n>2^{2^{\frac{n-2}{2}}}$, however, 
no concrete upper bound of $r_n$ is known. 

\subsection{Noether type inequalities in terms of $h^{k,0}$}\ 

According to \cite{EXPIII},   the most mysterious 3-folds of general type are those admitting global $2$-forms. So, in the first part of this paper, we prove a Noether type inequality between the canonical volume and the Hodge number $h^{0,k}=h^{k,0}$:

\begin{thm}(=Theorem \ref{inequality1}) Fix two integers $n$ and  $k$ with $n>0$ and $0< k\leq n$. There exist positive numbers $a_{n, k}$ and $b_{n, k}$ such that the inequality
$$\mathrm{vol}(V)\geq a_{n, k}h^0(\Omega^k_V)-b_{n,k}$$
holds for every smooth projective $n$-fold $V$ of general type.
\end{thm}

The constants $a_{n,k}$ and $b_{n,k}$ are related to minimal volumes of varieties of general type of dimensions $\leq n-1$. When $n$ is small, these numbers are explicitly known. These inequalities also suggest that pluricanonical maps on varieties with many global $k$-forms should behave well. Indeed, as an interesting application, we show the following result which improves Chen-Chen (\cite[Theorem 1.2(2)]{EXPII}):

\begin{thm} (=Theorem \ref{T3}) Let $X$ be a smooth projective $3$-fold of general type with $\chi(\OO_X)\neq 2,3$. Then $\vol(X)\geq \frac{1}{420}$ and the equality holds if and only if the weighted basket of $X$ is $$\mathbb{B}(X)=\{B_{420}, P_2=0, \chi=1\}\ \text{where}$$ 
$$B_{420}=\{3\times (1,2), (3,7), (2,5), (1,4), (1,6) \}.$$
\end{thm}

\subsection{A Lifting principle of $\{r_n\}$ due to global $2$-forms}\ 

The second part of this paper is devoted to studying ``Lifting Principle'' related to the sequence $\{r_n\}$. Let us recall the following known results:
\begin{itemize}
\item[$\circ$] {\bf The case with $\dim(V)=3$}.  When  $p_g(V)\geq 4$, we have $r_s(V)\leq 5(=r_2^+)$ by \cite[Theorem 1.2]{Chen03}; when  $\vol(V)>12^3$, we have $r_s(V)\leq 5(=r_5)$ by Todorov \cite[Theorem 1.2]{Tod} and \cite[Theorem 1.1]{Chen12} (see \cite[Page 2044]{CJ17} for the definition of $r_n^+$). 

\item[$\circ$] {\bf The case with $\dim(V)=4,5$}. There are constants $L(4)$ and $L(5)$.  When $\dim(V)=4$ and  $\vol(V)\geq L(4)$ (resp. $p_g(V)\geq L(4)$), we have $r_s(V)\leq r_3$ (resp. $\leq r_3^+$) by \cite[Theorem 1.4, Theorem 1.5]{CJ17};  when $\dim(V)=5$ and $p_g(V)\geq L(5)$, we have $r_s(V)\leq r_4^+$ by \cite[Theorem 1.5]{CJ17}. 

\item[$\circ$] 
{\bf The general case with $n=\dim(V)\geq 5$}. For each $n\geq 5$, there exists a constant $L(n)$ such that, when $\dim(V)=n$ and $\vol(V)\geq L(n)$ (resp., $p_g(V)>L(n)$), $r_s(V)\leq r_{n-1}$ (resp., $\leq r_{n-1}^+$) by Chen-Liu  \cite[Theorem 1.1]{CL24}. 
\end{itemize}

 We shall disclose a new type of lifting principle due to existence of plenty of global $2$-forms. 

\begin{thm}\label{3K}  Let $X$ be a nonsingular projective 3-fold of general type with $h^{2,0}(X) \geq 108\cdot 18^3+4$. Then $r_s(X)\leq 3$.
\end{thm}

Note that $\chi(\OO_X)=1+h^{2,0}(X)-q(X)-p_g(X)$. Hence an alternative form of Theorem \ref{3K} is as follows.

\begin{thm}\label{chi3} Let $X$ be a nonsingular projective 3-fold of general type with $\chi(\OO_X)\geq 108\cdot 18^3+5$. Then $r_s(X)\leq 3$.
\end{thm}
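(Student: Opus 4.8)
The plan is to obtain Theorem \ref{chi3} as an immediate consequence of Theorem \ref{3K}, the only task being to convert a lower bound on $\chi(\OO_V)$ into a lower bound on $h^0(V,\Omega_V^2)$. First I would recall that for a smooth projective $3$-fold $V$ one has
$$\chi(\OO_V)=\sum_{i=0}^{3}(-1)^i h^i(V,\OO_V)=1-q(V)+h^2(V,\OO_V)-p_g(V),$$
and that, by Hodge symmetry (valid in characteristic zero), $h^2(V,\OO_V)=h^{0,2}(V)=h^{2,0}(V)=h^0(V,\Omega_V^2)$. This gives the identity $\chi(\OO_V)=1+h^0(V,\Omega_V^2)-q(V)-p_g(V)$ already recorded above.

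Next, since $q(V)=h^1(V,\OO_V)\geq 0$ and $p_g(V)=h^0(V,\omega_V)\geq 0$, this identity yields
$$h^0(V,\Omega_V^2)=\chi(\OO_V)-1+q(V)+p_g(V)\ \geq\ \chi(\OO_V)-1.$$
Feeding in the hypothesis $\chi(\OO_V)\geq 108\cdot 18^3+5$ then produces $h^0(V,\Omega_V^2)\geq 108\cdot 18^3+4$, which is exactly the hypothesis of Theorem \ref{3K}. Applying that theorem verbatim shows that $\varphi_{m,V}$ is birational for all $m\geq 3$, as desired.

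There is essentially no obstacle here: the statement is a genuine restatement of Theorem \ref{3K} under a slightly stronger numerical assumption, reflecting the fact that among the quantities $h^{i,0}$ entering $\chi(\OO_V)$ only $h^{2,0}$ contributes with a positive sign. The one point meriting a word of care is the Hodge-theoretic identity $h^2(V,\OO_V)=h^0(V,\Omega_V^2)$ over an arbitrary algebraically closed field of characteristic zero, which follows from the symmetry $h^{p,q}=h^{q,p}$ via the Lefschetz principle.
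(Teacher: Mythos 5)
Your proof is correct and is exactly the argument the paper has in mind: right after Theorem \ref{3K} the authors record the identity $\chi(\OO_V)=1+h^0(V,\Omega_V^2)-q(V)-p_g(V)$ and present Theorem \ref{chi3} as the immediate corollary, the key observation (used again in the proof of Corollary \ref{inequality2}) being that $q(V),p_g(V)\geq 0$ forces $h^{2,0}(V)\geq \chi(\OO_V)-1$. Nothing further is needed.
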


A typical $3$-fold $V$ of general type with arbitrarily large $\chi(\OO_V)$  and $r_s(V)=3$ can be constructed as follows.

\begin{exmp}\label{exmp1} Let $C_i$ be a hyperelliptic curve of genus $g_i>1$ for $i=1, 2, 3$. We denote by $\tau_i$ the hyperelliptic involution on $C_i$ and let $f_i: C_i\rightarrow \mathbb P^1$ be the hyperelliptic quotient for $i=1, 2, 3$. We then have $f_{i*}\OO_{C_i}=\OO_{\mathbb P^1}\oplus \OO_{\bP^1}(-(g_i+1))$.

Let $X:=(C_1\times  C_2\times C_3)/\langle \tau_1\times \tau_2\times \tau_3\rangle$ be the diagonal quotient and let $f: X\rightarrow \mathbb P^1\times \mathbb P^1\times \mathbb P^1$ be the natural morphism. Then $X$ has finitely many singular points, which are isolated terminal quotient singularities. Let $\tau: V\rightarrow X$ be a desingularization. Since $X$ has rational singularities, $\mathbf R\tau_*\OO_V=\OO_X$. Considering the composition of morphisms $$g: V\xrightarrow{\tau} X\xrightarrow{f} \mathbb P^1\times \mathbb P^1\times \mathbb P^1,$$ we have
\begin{eqnarray*}
&&\mathbf Rg_*\OO_V=\mathbf R f_* \mathbf R \tau_*\OO_V=f_*\OO_X\\=&&\big((f_{1*}\OO_{C_1})\boxtimes (f_{2*}\OO_{C_2})\boxtimes (f_{3*}\OO_{C_3})\big)^{\langle\tau_1\times\tau_2\times \tau_3\rangle}\\=&&\big(\OO_{\mathbb P^1}\boxtimes \OO_{\mathbb P^1}\boxtimes\OO_{\mathbb P^1}\big)\\&&\bigoplus \big(\OO_{\mathbb P^1}(-(g_1+1))\boxtimes \OO_{\mathbb P^1}(-(g_2+1))\boxtimes\OO_{\mathbb P^1}\big)\\&& \bigoplus \big(\OO_{\mathbb P^1}\boxtimes \OO_{\mathbb P^1}(-(g_2+1))\boxtimes\OO_{\mathbb P^1}(-(g_3+1))\big)\\&&\bigoplus \big(\OO_{\mathbb P^1}(-(g_1+1))\boxtimes \OO_{\mathbb P^1}\boxtimes\OO_{\mathbb P^1}(-(g_3+1))\big).
\end{eqnarray*}
We then see that $h^1(V, \OO_V)=h^3(V, \OO_V)=0$ and $h^2(V, \OO_V)=g_1g_2+g_1g_3+g_2g_3$. Moreover, $\vol(V)=\vol(K_{X})=\frac{1}{8}\vol(C_1\times C_2\times C_3)=(g_1-1)(g_2-1)(g_3-1)$.

If $g_1$ is large, so is $\vol(V)$. Let $g_2=2$, then $V$ has a genus $2$ fibration, $|2K_V|$ cannot be birational. Therefore $r_s(V)=3$, which means the statement in Theorem \ref{3K} is sharp.
\end{exmp}

We can extend Theorem \ref{3K} to dimension $4$.

\begin{thm}\label{5K} There exists a constant $M(4)$ such that, for any nonsingular projective $4$-fold $X$ of general type with $h^{2,0}(X)\geq M(4)$, $r_s(X)\leq r_2=5$.
\end{thm}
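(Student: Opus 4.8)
The plan is to reduce the statement to a controlled situation where one can run a classical pluricanonical-map argument. First I would use Theorem \ref{inequality1} (the Noether-type inequality) to convert the hypothesis $h^0(V,\Omega_V^2)\geq M(4)$ into a lower bound $\vol(V)\geq a_{4,2}h^0(V,\Omega_V^2)-b_{4,2}$, so that a large value of $h^{2,0}$ forces $\vol(V)$ to be arbitrarily large. With $\vol(V)$ large, one can hope to invoke Chen--Jiang \cite[Theorem 1.4]{CJ17}, which gives $r_s(V)\leq r_3$ for $4$-folds of sufficiently large volume; but $r_3\leq 57$, so this alone only yields $\varphi_{m,V}$ birational for $m\geq 57$, not $m\geq 5$. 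Hence the volume bound is necessary but not sufficient: the point of using many $2$-forms rather than just large volume must be that $V$ carries extra structure coming from its Albanese-type or cohomological geometry.

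The core of the argument should therefore exploit the geometry forced by $h^0(V,\Omega_V^2)\gg 0$. I would pass to a smooth minimal-type model and study the map associated to $\bigwedge^2$ of the Albanese (or, more precisely, the Castelnuovo--de Franchis type) data: a large space of global $2$-forms on a $4$-fold either produces a fibration over a lower-dimensional base (via a Castelnuovo--de Franchis--Ran type argument applied to decomposable $2$-forms) or forces the image of a suitable $1$-form map to be large. In the fibred case, the generic fibre $F$ is a variety of general type of dimension $\leq 3$, and $V$ fibres over a base $B$ with $\dim B\in\{1,2\}$; one then bounds $r_s(V)$ in terms of $r_{\dim F}$ and the positivity of the direct image sheaves $g_*\omega_{V/B}^{\otimes m}$ (which are weakly positive by Viehweg), using that many $2$-forms on $V$ restrict to many global forms on the fibres or the base. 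Combining this with the Noether inequality on the fibres/base, one arranges that $m=5$ already separates points and tangent vectors: the number $5$ is exactly $r_2^+=5$ in the surface-fibre case and is compatible with $r_3$-type bounds once enough positivity is injected into the system $|mK_V|$.

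Concretely, the key steps in order would be: (1) invoke Theorem \ref{inequality1} to get $\vol(V)\to\infty$ as $M(4)\to\infty$; (2) analyze the subvariety of decomposable classes in $H^0(V,\Omega_V^2)$ and, via a Castelnuovo--de Franchis--type theorem, produce either an irrational pencil / higher-dimensional fibration $g\colon V'\to B$ with $\dim B\leq 2$ on a birational model $V'$, or conclude that the Albanese map is generically finite onto its image of dimension $\geq 3$; (3) in each case, bound the multiplicities needed for $|mK_{V'}|$ to be birational, using Viehweg weak positivity of $g_*\omega_{V'/B}^{\otimes m}$, the induction hypothesis $r_2=5$ and $r_3\leq 57$ on fibres together with the largeness of $\vol$ and of the base data, plus the standard technique (as in \cite{Chen03, Chen12, Tod, CJ17}) of cutting down to fibres and lifting sections; (4) track all the numerical thresholds to define $M(4)$ explicitly, or at least to prove its existence. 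The main obstacle I expect is step (3) in the case where the fibration has a three-dimensional fibre: there $r_3\leq 57$ is far from $5$, so one must use the many $2$-forms to show that the fibration is in fact much more special (e.g., the fibres themselves inherit many $2$-forms, so the effective statements for $3$-folds with large $h^{2,0}$ from Theorem \ref{3K} apply to the fibres, bringing the fibrewise index down to $3$), and then carefully propagate the birationality from fibres to $V$ for $m\geq 5$; controlling this fibrewise-to-total transition, and ruling out the generically-finite-Albanese-onto-image case with small volume, is where the real work lies.
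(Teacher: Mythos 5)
Your opening step --- deducing $\vol(V)\gg 0$ from $h^{2,0}(V)\gg 0$ via Theorem \ref{inequality1} --- matches the paper, and your general instinct that one must produce fibrations and then push sections from fibres to $V$ using weak positivity is also aligned. But there are two genuine gaps in the way you propose to carry this out.

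First, your mechanism for producing the fibration is wrong. You propose a Castelnuovo--de Franchis--type argument on decomposable classes in $H^0(V,\Omega_V^2)$. This fails because the hypothesis is only that $h^{2,0}(V)$ is large, not $q(V)$: in Example \ref{exmp3}, $V=S\times S'$ with $S$ a $(1,2)$-surface and $S'$ a regular minimal surface with $p_g(S')=g\gg 0$, one has $h^{2,0}(V)\geq g$ but almost all of those $2$-forms are pulled back from $p_g(S')$ and are not decomposable; wedging $1$-forms detects nothing here. The paper instead produces fibrations by the Hacon--McKernan/Takayama cutting of non-klt loci together with Todorov's lemma spreading a bounded-volume minimal log canonical centre into a family --- a volume-based construction that does not require any structure on the $2$-forms themselves.

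Second, you misidentify where the difficulty lies. You expect the hard case to be a fibration $g\colon V\to C$ with $3$-dimensional fibres, reasoning that $r_3\leq 57\gg 5$ forces you to equip the fibres with many $2$-forms so that Theorem \ref{3K} applies to them. In the paper this case is in fact the easy one, and for a different reason: from
$h^2(V,\omega_V)=h^1(C,R^1f_*\omega_V)+h^0(C,R^2f_*\omega_V)$
and the boundedness of $h^1(C,R^1f_*\omega_V)$ (via nefness of $R^1f_*\omega_{V/C}$), one gets $R^2f_*\omega_V\neq 0$, hence the general fibre $F$ is an \emph{irregular} $3$-fold, and the CCCJ theorem gives that $|5K_F|$ is birational without any hypothesis on $h^{2,0}(F)$. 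The genuinely difficult configuration, which your plan does not address, is a fibration $f\colon V\to S$ over a \emph{surface} $S$ with $2$-dimensional fibres of small volume ($p_g(F)=q(F)=1$, $\vol(F)\in\{2,3\}$), where $S$ may have $\kappa(S)\leq 1$ and small $p_g(S)$. Here $r_2=5$ takes care of the fibre, but lifting $|5K_F|$ to $|5K_V|$ requires the positivity of $K_S+\Delta$ for a suitable boundary $\Delta$, and the paper has to build this from the relative Albanese morphism of $f$, the structure theory of $2$-forms on non-general-type $3$-folds of Campana--Peternell, and the two new extension theorems (Theorems \ref{extension} and \ref{extension1}). None of this machinery, nor the case distinction by $\kappa(S)$ and $\kappa(X)$ (for the relative Albanese $X$), appears in your sketch, so as written the argument does not close.
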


The next two examples show that the statement in Theorem \ref{5K} is also sharp.

 \begin{exmp}\label{exmp2} Let $Y$ be a minimal irregular threefold of general type with $q(Y)=1$ such that the general fiber of the Albanese morphism of $Y$ is a $(1,2)$-surface (namely, the minimal model of this fiber has invariants $(K^2, p_g)=(1,2)$).   Let $V=Y\times C$, where $C$ is a smooth  projective curve of any genus $g\geq 2$. Then, as $g$ is sufficiently large, $V$ is a fourfold with sufficiently large $h^{2,0}(V)$, but $|4K_V|$ cannot induce a birational map since $r_s(S)=5$. Thus, when $g$ is large enough, $r_s(V)=5$.
 \end{exmp}

\begin{exmp}\label{exmp3} We still denote by $S$ a minimal $(1,2)$-surface. Let $S'$ be a smooth minimal surface of general type with $p_g(S')=g$. Let $V=S\times S'$. Then, when $g$ is large enough, then so is $h^{2,0}(V)$,  but $|4K_V|$ cannot induce a birational map. Hence $r_s(V)=5$.
\end{exmp}


Observing that two numbers ``$3$'' and ``$5$'' in the statements of Theorem \ref{3K} and Theorem \ref{5K} respectively correspond to $r_{n-2}$ where $n=\dim (X)=3,4$, one might state the higher dimensional analog as an interesting conjecture (see Section 4). However the next example shows that the higher dimensional analogue of Theorem \ref{chi3} fails already in dimension $4$. 

\begin{exmp}
Let $C$ be a smooth projective curve and $L$ a very ample line bundle on $C$ with  $d=\deg(L)\gg 1$. Let $\mu: Y\rightarrow \mathbb P:=\mathbb P(1, 3, 4, 5, 14)$ be a resolution of the weighted projective space of dimension $4$ such that $|\mu^*\OO_{\mathbb P}(28)|=|M|+E$, where the effective $\mathbb Q$-divisor $E$ is the fixed part and the mobile part $|M|$ is base point free. Let $V\subset C\times Y$ be a general hypersurface of $|L\boxtimes M|$. Then $V$ is a smooth $4$-fold  and $\omega_V=\big((K_C\otimes L)\boxtimes (K_Y\otimes M)\big)|_V$. Let $f: V\rightarrow C$ be the natural fibration. One sees that $R^if_*\omega_V=0$ for $i=1, 2$, since a genearl fiber of $f$ is birational to a hypersurface of degree $28$ in $\mathbb P(1, 3, 4, 5, 14)$. Moreover, an easy computation shows that $f_*\omega_V=K_C\otimes L$ and $R^3f_*\omega_V=K_C$.
Hence, $\chi(\omega_V)=d\gg 0$, $h^{2, 0}(V)=0$ and  $|13K_V|$ cannot induce a birational map of $V$.
Thus $r_s(V)>13>r_2=5$.
\end{exmp}

{}Finally we study irregular varieties of general type. Recalling Koll\'ar's theorem for $3$-folds (\cite[Theorem 6.2(iv)]{Kol86}) in 1986, we 
provide a higher dimensional version:

\begin{thm}\label{1form>n} (=Theorem \ref{1form}) Let $X$ be a smooth projective variety of general type of dimension $n\geq 4$. Assume that $q(X)>n$. Then $|mK_X|$ induces a birational map for all $m\geq r_{n-1}$.
 \end{thm}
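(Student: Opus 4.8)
The plan is to use the Albanese map together with an induction on dimension via fibrations, exploiting the hypothesis $q(X) > n$. The key structural observation is that when $q(X) > \dim X$, the Albanese map $a_X \colon X \to \mathrm{Alb}(X)$ cannot be surjective (a surjective morphism to an abelian variety of dimension $> n$ from an $n$-fold is impossible for dimension reasons), so its image $Z = a_X(X)$ is a proper subvariety of $\mathrm{Alb}(X)$ of dimension $\le n-1$. By the structure theory of subvarieties of abelian varieties (Ueno, and the Chen--Hacon / Jiang style fibration arguments), after passing to a suitable quotient abelian variety we may assume the Albanese image generates $\mathrm{Alb}(X)$ and has a well-understood fibration structure. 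I would first reduce to the case where $a_X$ has connected fibers (Stein factorization), and consider the fibration $f \colon X \to Z$ onto a normal projective model of the Albanese image, with $\dim Z = d \le n-1$.

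Next I would distinguish according to whether $Z$ is of general type or not. If $\dim Z \le n-1$ and the general fiber $F$ of $f$ has dimension $n - d \ge 1$: when $F$ has positive dimension, $F$ is a smooth projective variety of general type of dimension $\le n-1$, so by definition $|m K_F|$ is birational for $m \ge r_{n-1}$. The strategy is then to propagate birationality from the fibers to $X$ using the positivity coming from the base — concretely, via a Hacon--Chen--Jiang type argument: $f_* \omega_X^{m}$ is a GV-sheaf (or more) on the abelian variety, and tensoring with a general translate of any ample line bundle, or using generic vanishing together with the fact that $Z$ generates the abelian variety, shows that $|m K_X|$ separates points in distinct fibers and, combined with birationality on the fiber, separates points on $X$. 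This is exactly the mechanism used in \cite{CJ17} and in Theorem \ref{1form>n}'s lower-dimensional predecessors. The degenerate case $d = n$ cannot occur since $d \le n-1$; the case where the general fiber is a point, i.e. $a_X$ is generically finite onto its image, needs separate treatment: here $X$ is of general type and birational to its Albanese image $Z \subset \mathrm{Alb}(X)$, a subvariety of general type of an abelian variety, and one invokes the effective results on pluricanonical maps of such varieties (Jiang--Lahoz--Tirabassi, Barja--Pardini--Stoppino, or Chen--Jiang), which give birationality of $|mK_X|$ for small $m$, in particular for $m \ge 3 \le r_{n-1}$.

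A cleaner route, which I would actually prefer to carry out, avoids case analysis on $\dim Z$: since $a_X$ is not surjective, fix a general fiber $F$ of the fibration $X \to Z$ over its Albanese image. Because $q(X) > n$, we can choose the fibration so that $0 < \dim F \le n-1$; indeed if $a_X$ were generically finite we would handle that separately as above. Then replace the base by the whole abelian variety and use that $\mathrm{Alb}(X)$ carries enough ample line bundles with good generic vanishing behavior: for a sufficiently general $P \in \mathrm{Pic}^0$, $H^0(X, \omega_X^m \otimes a_X^* P) = H^0(Z, f_*(\omega_X^m) \otimes P)$ is nonzero and the sheaf $f_* \omega_X^m$ is globally generated away from a bad locus after twisting by such $P$. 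Then birationality of $\varphi_{m, X}$ reduces to: (i) separating two general points in the same fiber $F$, which holds for $m \ge r_{n-1}$ by induction since $\dim F \le n-1$ and $F$ is of general type; and (ii) separating points in distinct fibers, which follows because the twisted sheaf is generically globally generated and $a_X$ separates the images of the two fibers in $\mathrm{Alb}(X)$.

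The main obstacle I anticipate is handling the boundary case where $a_X$ is generically finite onto its image (so the "fiber" is zero-dimensional and the inductive hypothesis on $r_{n-1}$ gives nothing): there one must show directly that $|mK_X|$ is birational for $m \ge r_{n-1}$ using only that $X$ is birational to a subvariety of general type of an abelian variety $A$ with $\dim A > n$. For this the relevant input is the effective birationality of pluricanonical maps of such varieties; the condition $\dim A > n = \dim X$ forces the subvariety to be "non-degenerate" in a strong sense and $|3K_X|$ — hence $|mK_X|$ for all $m \ge 3$ — is birational, and $r_{n-1} \ge r_3 \ge 3$ for $n \ge 4$. A secondary technical point is ensuring the fiber $F$ is smooth of general type and that the birationality on $F$ genuinely lifts; this is where one must be careful to take $F$ general and to use that $K_X|_F = K_F$ up to the (effective) relative canonical, and to invoke the generic vanishing positivity of $f_*\omega_X^m$ on the abelian variety rather than on the possibly-singular image $Z$. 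These are by now standard maneuvers in the Chen--Jiang circle of ideas, so I expect the proof to go through along these lines.
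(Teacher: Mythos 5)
Your overall strategy (Albanese map, Stein factorization, fibration, separate fiber from base) is the right one and matches the paper's skeleton, but both concrete routes you sketch leave the same central step unproved, and the ``cleaner route'' in fact omits the single structural reduction that makes the paper's argument work.

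The paper does not fibre $X$ over its Albanese image. It first applies Ueno's theorem to obtain a quotient $q_B\colon A_X\to B$ with $q_B(a_X(X))$ of general type, and only then Stein factorizes $q_B\circ a_X$ to get $h\colon X\to Z$. This is not a cosmetic choice: $Z$ is then of maximal Albanese dimension \emph{and} of general type, so (a) $|3K_Z|$ is birational by \cite{CH01,JLT}, and (b) by Griffiths--Harris the canonical map of a smooth model of $t(Z)$ is generically finite, which lets one pull back hyperplanes through a general point $z\in Z$ to manufacture an effective $\mathbb{Q}$-divisor $D\sim_{\mathbb{Q}}n_1 K_Z$ with $z$ an isolated lc center of $(Z,D)$. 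That lc-center construction is exactly what feeds into Theorem \ref{extension1} and produces the surjection $H^0(X,mK_X)\to H^0(F,mK_F)$ for $m\geq n_1+2$. If instead you fibre over the Albanese image itself, the base need not be of general type (it is only a subvariety of an abelian variety, possibly with an abelian fibration structure and $\kappa<\dim$), and neither (a) nor (b) is available; there is then no mechanism to build the lc centers the extension theorem requires.

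The second and more fundamental gap is in your point (i): you argue ``separating two general points in the same fiber $F$ holds for $m\geq r_{n-1}$ by induction.'' But $|mK_F|$ birational on $F$ does not mean $|mK_X|$ separates those points --- one needs the restriction map $H^0(X,mK_X)\to H^0(F,mK_F)$ to be surjective, or at least to have image inducing a birational map. That surjectivity is precisely what Theorem \ref{extension1} delivers, at the cost of the bound $m\geq n_1+2$ which the paper then reconciles with $r_{n-1}$ by checking $n+2\leq r_{n-1}$ for $n\geq 4$. Your proposed substitute --- generic global generation of $f_*\omega_X^{\otimes m}$ after twisting by a general $P\in\Pic^0$ --- only gives surjectivity of $H^0(X,mK_X\otimes a_X^*P)\to H^0(F,mK_F)$ for generic $P$; removing the twist is a nontrivial step (it is here that one would need an M-regular or Chen--Jiang-decomposition input, which you do not supply). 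The paper instead proves effectivity of $mK_X-3h^*K_Z$ by showing $t_*h_*(\OO_X(mK_X-3h^*K_Z)\otimes\CJ(\|D\|))$ is a nonzero IT$^0$ sheaf on $B$ via Koll\'ar vanishing --- which does not need a generic-$P$ argument, because IT$^0$ on an abelian variety forces a global section with no twist. So while both approaches live in the generic-vanishing world, the specific non-vanishing mechanism you gesture at does not close. (A minor slip: you initially write $\dim a_X(X)\leq n-1$, but the Albanese image of an $n$-fold can have dimension $n$; you do catch this later when treating the generically finite case, which you handle correctly by citing $|3K|$ birationality on maximal-Albanese-dimension varieties and $3\leq r_{n-1}$.)
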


\subsection{Notions and notations}\ \

A variety $X$ is an integral separated scheme of finite type. We will always work on normal projective varieties.
Let $D_1$ and $D_2$ be two Weil $\mathbb Q$-divisors on a normal variety $X$. We say that $D_1
\geq D_2$ if $D_1-D_2$ is an effective $\mathbb Q$-Weil divisor. We say that $D_1\geq_{\mathbb Q}D_2$ if there exits a positive integer $l$ such that $l(D_1-D_2)$ is a Cartier divisor and is linearly equivalent to an effective Cartier divisor. We write $D_1\sim_{\mathbb Q}D_2$ if
$l(D_1-D_2)$ is a principal Cartier divisor for a sufficiently large and divisible integer $l$.
We say that a $\mathbb Q$-Cartier divisor $D$ on a normal projective variety $X$ is psuedo-effective if for any ample Cartier divisor $H$ on $X$ and any rational number $b>0$, $D+bH$ is a big $\mathbb Q$-Cartier divisor.

Let $X$ be a normal variety and $D$ an effective $\mathbb Q$-Weil divisor on $X$. 
We assume that $K_X+D$ is $\mathbb Q$-Cartier.
Let $\mu: X'\rightarrow X$ be a log resolution of $(X, D)$. We may write $$K_{X'}=\mu^*(K_X+D)+\sum_Ea(E;X, D)E,$$ where $E$ runs over all distinct prime divisors of $X'$ and $a(E; X, D)\in \mathbb Q$. We call $a(E; X, D)$ the discrepancy of $E$ with respect to $(X, D)$. We say that $(X, D)$ is log canonical (resp., klt) at $x\in X$ if $a(E; X, D)\geq -1$ (resp., $>-1$) for each $E$ such that $x\in \mu(E)$. Let $E\subset X'$ be a prime divisor with discrepancy $-1$. We say that $\mu(E)$ is a log canonical center or lc center of $(X, D)$ if $(X, D)$ is log canonical at a general point of $\mu(E)$. A log canonical center, which is minimal with respect to the inclusion, is called a minimal log canonical center. Assume $(X, D)$ is log canonical at $x\in X$ and let $C_1$ and $C_2$ be two lc centers of $(X, D)$ containing $x$. By \cite[Proposition 1.5]{Kaw97}, each irreducible component of $C_1\cap C_2$ containing $x$ is also a lc center of $(X, D)$. In particular, the minimal lc center of $(X, D)$ at $x$ is well-defined. Let $E_1,\ldots, E_m$ be the divisors with discrepancy $\leq -1$ of $(X, D)$. Then $\mu(E_1\cup \cdots \cup E_m)$ is called the non-klt locus of $(X, D)$, usually denoted by $\mathrm{Nklt}(X, D)$. When $X$ is smooth, we denote by $\CJ(D)=\CJ(X, D)=\mu_*\OO_{X'}(\sum_E\lceil a(E; X, D)E\rceil)$ the multiplier ideal of $D$ (see \cite[Section 9]{Laz}). Then it is clear that $\mathrm{Nklt}(X, D)$ is the support of the subscheme of $X$ defined by $\CJ(D)$. Let $D$ be an effective $\mathbb Q$-divisor on a smooth variety $X$. We denote by $\mathrm{lct}(X; D)$ the maximal positive rational number $t$ such that $(X, tD)$ is log canonical at each point of $X$. We call $\mathrm{lct}(X; D)$ the log canonical threshold of $D$.

Let $\mathcal F$ be a torsion-free coherent sheaf of rank $r$  on a smooth variety $X$. Let $j: U\subset X$ be the locus where $\CF$ is locally free. Then, $\mathrm{codim}_X(X\setminus U)\geq 2$.  We write $\det \mathcal F$ to be the unique Cartier divisor on $X$, which extends $\wedge^r\CF$ on $U$. We also denote by $\CF^{**}$ the reflexive hull $j_*(j^*\CF)$ of $\CF$.

We say that a set $\mathfrak{X}$ of varieties is birationally bounded if there is a projective morphism between schemes, say $\tau: \mathcal X\rightarrow T$ where $T$ is of finite type, such that every element $X\in \mathfrak{X}$ is birationally  equivalent to a general fiber $\mathcal X_t=\tau^{-1}(t)$ for a closed point $t\in T$.

We usually denote by $\epsilon$ a sufficiently small positive rational number.

\section{\bf Noether type inequalities between $\vol{}$ and $h^{k,0}$}
\subsection{A general inequality}\

According to Hacon-McKernan \cite{H-M06}, Takayama \cite{Tak06} and Tsuji \cite{T07}, given any positive rational number $M$, the set of smooth projective $n$-folds whose canonical volumes are in the interval $(0,M)$ is birationally bounded.  
By MMP and Birkar-Cascini-Hacon-McKernan \cite{BCHM}, any smooth projective variety $X$ of general type has a minimal model $X_\text{{min}}$, where $X_\text{{min}}$ has ${\bQ}$-factorial terminal singularities and $K_{X_\text{min}}$ is nef.

\begin{thm}\label{inequality1}  Fix two integers $n$ and  $k$ with $n>0$ and $0< k\leq n$. There exist positive numbers $a_{n, k}$ and $b_{n, k}$, depending only on $n$ and $k$, such that the inequality
$$\mathrm{vol}(X)\geq a_{n, k}h^0( \Omega^k_X)-b_{n,k}$$
holds for every smooth projective $n$-fold $X$ of general type.
\end{thm}

When $k=n$, we have $h^n(X, \OO_X)=h^0(X, K_X)=p_g(X)$ and hence Theorem \ref{inequality1} is a generalization of the ordinary Noether type inequality (see Chen-Jiang \cite[Corollary 5.1]{CJ17}). When $n=2$, we have Debarre's inequality (see \cite{Deb}): $\vol(X)\geq \mathrm{max}\{2p_g(X), 2p_g(X)+2(q(X)-4)\}$. Since we always have $p_g(X)\geq q(X)$ for surfaces of general type, we also have $\vol(X)\geq \mathrm{max}\{4q(X)-8, 2q(X)\}$.

\begin{lem}\label{division} Let $\CE$ be a torsion-free sheaf of rank $r$ over a  projective variety $X$. Assume that $h^0(X, \CE)>0$. Then there exists a  torsion-free subsheaf $\CF\subset \CE$ such that $h^0(X, \det \CF)\geq \lceil \frac{h^0(X, \CE)}{r}\rceil$. 
\end{lem}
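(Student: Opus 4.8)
The plan is to prove Lemma~\ref{division} by a pigeonhole argument on a filtration of the space of global sections, extracting from $\CE$ a sub-line-bundle-like piece on a large open set and then taking its determinant.

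First I would reduce to a situation where $\CE$ is locally free. Let $j\colon U\hookrightarrow X$ be the open locus where $\CE$ is locally free; since $\CE$ is torsion-free, $\codim_X(X\setminus U)\geq 2$, so restriction induces an injection $H^0(X,\CE)\hookrightarrow H^0(U,\CE|_U)$, and similarly $H^0(U,\det(\CE|_U))$ sees the determinant divisor on $X$. Because the quantity $h^0(X,\det\CF)$ is computed from the reflexive hull on $U$, it suffices to produce, on $U$, a subsheaf of rank $1$ (a sub-line-bundle of $\CE|_U$ away from a further codimension-$2$ locus) whose space of sections has dimension at least $h^0(X,\CE)/r$; extending the corresponding Cartier divisor back across $X\setminus U$ gives the asserted $\CF$ (take $\CF$ to be the saturation, inside $\CE$, of the rank-one subsheaf generated by the chosen section).

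The core of the argument is the following. Choose a general point $x\in U$ and consider the evaluation/restriction maps to the successive "leading coefficients" along a general flag through $x$: concretely, filter $H^0(X,\CE)$ by order of vanishing along a general hyperplane section, or, more robustly, use that a nonzero section $s_0\in H^0(X,\CE)$ generates a rank-one subsheaf $\CF_0\subset\CE$; on the open set where $\CF_0$ is a sub-bundle we get a short exact sequence $0\to\CF_0\to\CE\to\CQ_0\to 0$ with $\CQ_0$ torsion-free of rank $r-1$. Now split $H^0(X,\CE)$: the sections landing in $H^0(X,\CF_0)$ contribute to $\det\CF_0=\CF_0^{**}$, while the rest map to $H^0(X,\CQ_0)$. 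If $h^0(X,\CF_0)\geq h^0(X,\CE)/r$ we are done with $\CF=\CF_0$; otherwise at least $h^0(X,\CE)-h^0(X,\CF_0)>\tfrac{r-1}{r}h^0(X,\CE)$ sections survive in $H^0(X,\CQ_0)$, and we recurse on $\CQ_0$, whose rank has dropped to $r-1$. By induction on the rank, after at most $r-1$ steps we reach a rank-one torsion-free sheaf $\CG$ on $X$ receiving an injection from a subspace of $H^0(X,\CE)$ of dimension $>\tfrac{1}{r}h^0(X,\CE)$; since $\det$ of a rank-one torsion-free sheaf is its reflexive hull, $h^0(X,\det\CG)\geq h^0(X,\CE)/r$, and pulling the subsheaf structure back through the chain of extensions exhibits $\CG$ as a quotient of a subsheaf of $\CE$ — but one checks that at each stage the preimage in $\CE$ is again a subsheaf whose determinant dominates that of the quotient, so in fact we may take $\CF\subset\CE$ to be that preimage and use $\det\CF\geq\det(\text{image in }\CE)$. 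The cleanest bookkeeping is: at step $i$ we have $0\subset\CF_1\subset\cdots\subset\CF_i\subset\CE$ with $\CF_i/\CF_{i-1}$ rank one, $\det\CF_i\sim\det\CF_{i-1}+\det(\CF_i/\CF_{i-1})$ up to effective divisors on $U$, and $h^0$ of the top quotient at least $h^0(X,\CE)/r$; then $h^0(X,\det\CF_i)\geq h^0(X,\CF_i/\CF_{i-1})\geq h^0(X,\CE)/r$.

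The main obstacle I anticipate is purely the commutative-algebra/codimension bookkeeping: making sure that passing to the locally free locus, taking saturations, and extending determinant divisors across the codimension-$\geq 2$ bad locus does not lose sections, and that the inequality $h^0(X,\det\CF)\geq h^0(X,\det\CF')$ when $\CF'\subset\CF$ with equal rank (so $\det\CF-\det\CF'$ effective) is applied correctly. None of these steps is deep, but they must be stated with care since $\CE$ is only torsion-free, not locally free, and $X$ is only assumed projective (not smooth). The numerical pigeonhole itself — that one of $r$ nested quotients carries a $1/r$ fraction of the sections — is immediate once the filtration is set up.
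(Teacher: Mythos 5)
Your proof is correct and pursues the same induction-on-rank pigeonhole strategy as the paper, but you filter by quotients where the paper cuts down by subsheaves. Your scheme: build an increasing filtration $0=\CF_0\subset\CF_1\subset\cdots\subset\CF_r=\CE$ by repeatedly choosing a global section of $\CE$ not already in $\CF_{j-1}$, taking the saturated rank-one subsheaf of $\CE/\CF_{j-1}$ it generates, and pulling back; since $\sum_{j=1}^r\bigl(h^0(\CF_j)-h^0(\CF_{j-1})\bigr)=h^0(\CE)$, some rank-one quotient $\CF_i/\CF_{i-1}$ carries at least $h^0(\CE)/r$ sections, and then $h^0(\det\CF_i)\geq h^0(\CF_i/\CF_{i-1})$. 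That last inequality needs a point you leave implicit: $\det\CF_{i-1}$ must be effective, which it is because each lower graded piece $\CF_j/\CF_{j-1}$ is a rank-one torsion-free sheaf generated by a nonzero global section, so each $\det(\CF_j/\CF_{j-1})$ is effective and $\det\CF_{i-1}$ is their sum; you should spell this out. The paper's proof instead stays inside $\CE$: it picks $r-1$ general sections $s_1,\ldots,s_{r-1}$ and considers the linear map $\phi\colon H^0(X,\CE)\to H^0(X,\det\CE)$ sending $s$ to $s_1\wedge\cdots\wedge s_{r-1}\wedge s$; either $\det\CE$ already has $\geq h^0(X,\CE)/r$ sections (take $\CF=\CE$), or $\ker\phi$, of dimension $>\tfrac{r-1}{r}h^0(X,\CE)$, evaluates into a rank-$(r-1)$ subsheaf $\CE''\subset\CE$ with many sections, and one recurses directly on $\CE''$. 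Because the paper's recursion produces a decreasing chain of subsheaves of $\CE$ rather than a quotient-and-pullback, it never needs the effective-determinant bookkeeping your pullback step requires, and the codimension-two gymnastics you flag are confined to the definition of $\det$; the wedge-product formulation is therefore a touch leaner, while your filtration is equally valid and perhaps more geometric.
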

\begin{proof}
We shall run induction on $r$. When $r=1$, it is a trivial statement. We now assume that $\rank (\CE)=r>1$. We may assume that the evaluation map $$\Phi_{\text{ev}}: H^0(X, \CE)\otimes\OO_X \rightarrow \CE$$ is generically surjective. Otherwise, the image $\CE'$ of $\Phi_{\text{ev}}$ is of rank $\leq r-1$ and may replace $\CE$ by $\CE'$.

Take $s_1,\ldots, s_{r-1}\in H^0(X, \CE)$, which generate a subspace $W$ of $H^0(X, \CE)$, so that the evaluation map $\Phi_{\text{ev}}|_W: W\otimes\OO_X\rightarrow \CE$ is injective. Then we consider  the wedge product:
\begin{eqnarray*}&\phi_W&:H^0(X, \CE)\rightarrow H^0(X, \det \CE)\\
&s& \rightarrow s_1\wedge\cdots\wedge s_{r-1}\wedge s.
\end{eqnarray*}
If $h^0(X, \det \CE)\geq \frac{h^0(X, \CE)}{r}$, we are done. Otherwise,
$$\dim \ker \phi_W>\frac{r-1}{r}\cdot h^0(X, \CE).$$
 Let $W':=\ker \phi_W\supset W$. We now consider $$\Phi_{\text{ev}}|_{W'}: W'\otimes\OO_X\rightarrow \CE.$$ Since, for each $s\in W'$, $s$ is linearly dependent with $s_1, \ldots, s_{r-1}$ at a general point of $X$, the image of $\Phi_{\text{ev}}|_{W'}$ is a subsheaf $\CE''\subset \CE$ of rank $r-1$. Note that $H^0(X, \CE'')\supset W'$. Thus, by induction, $\CE''$ contain a subsheaf $\CF$ with $h^0(X, \det \CF)\geq \frac{h^0(X, \CE'')}{r-1}>\frac{h^0(X, \CE)}{r}$.
\end{proof}

\begin{proof}[{\bf Proof of Theorem \ref{inequality1}}]
 By Lemma \ref{division}, there exists a subsheaf $\CF$ of $\Omega_X^k$ such that $h^0(X, \det \CF)\geq \frac{h^0(X, \Omega_X^k)}{\binom{n}{k}}$. We may replace $\CF$ by its saturation in $\Omega_X^k$ and denote by $\CQ$ the corresponding quotient bundle.  Set $H:=\det \CF$ and $L:=\det \CQ$. Then
$$\binom{n-1}{k-1}K_X\sim \det(\Omega_X^k)\sim H+L.$$
We know that $L$ is pseudo-effective by \cite[Theorem 0.1]{CP1} or \cite[Theorem 1.2]{CP}.

For $h^0(\Omega_X^k)>\binom{n}{k}$, $h^0(H)\geq 2$. After birational modifications of $X$, we may assume that $|H|$ is base point free and have the following commutative diagram:
 \begin{eqnarray*}
 \xymatrix{
 X\ar[r]^{\pi} \ar[d]^{\varphi_H}& X_{\text{min}}\\
\mathbb P(H^0(X, H)),}
 \end{eqnarray*}
where $X_{\text{min}}$ is a minimal model of $X$ and $\varphi_H$ is the morphism induced by the linear system $|H|$. Denote by $\varphi_H: X\xrightarrow{f} \Gamma\xrightarrow{s} \mathbb P(H^0(X, H))$ the Stein factorization of $\varphi_H$ and let $d=\dim \Gamma$. Let $F$ be a general fiber of $f$.

 Take $d-1$ general hyperplane sections $H_1,\ldots, H_{d-1}$ of $\mathbb P(H^0(X, H))$. Let $W=s^{*}(H_1)\cap\cdots\cap s^{*}(H_{d-1})$ and $X_W=f^{-1}(W)$. Then the induced morphism $f_W:=f|_{X_W}: X_W\rightarrow W$ is a fibration from a smooth projective variety $X_W$ of dimension $n-d+1$ to a smooth projective curve. Let $a\geq 1$ be the degree of $s^*H_1$ on $W$. Note that $a\geq h^0(X, H)-d$.

  Then, by Kawamata's restriction theorem (see \cite[Theorem A]{Kaw99}), for each $m\geq 2$, $$|am(K_{X_W}+\frac{1}{a}H|_{X_W})|_{|_F}=|amK_{F}|.$$ Repeatedly applying Kawamata's restriction theorem, one gets,  for $m\geq 2$,
 \begin{eqnarray}\label{restriction}
 &&|ma(K_X+(d-1+\frac{1}{a})H)|_{|_F}\\\nonumber
 &=&|ma(K_X+\varphi_H^*H_1+\cdots+\varphi_H^*H_{d-1}+\frac{1}{a}H)|_{|_F}\\\nonumber
 &=&|ma(K_{X_W}+\frac{1}{a}H|_{X_W})|_{|_F}\\\nonumber
 &=&|maK_{F}|.
 \end{eqnarray}
We take a  rational number $0<\epsilon\ll 1$ and consider the $\mathbb Q$-divisor $(d-1+\frac{1}{a})L+\epsilon K_X$. Since $L$ is pseudo-effective and $K_X$ is big, $$M((d-1+\frac{1}{a})L+\epsilon K_X)$$ is effective for any sufficiently large and divisible integer $M$.
Therefore,
\begin{eqnarray*}
&&|M\big(\binom{n-1}{k-1}(d-1+\frac{1}{a})+1+\epsilon\big)K_X|\\
&=&|M(K_X+(d-1+\frac{1}{a})H)+M((d-1+\frac{1}{a})L+\epsilon K_X)|\\
&\supset & |M(K_X+(d-1+\frac{1}{a})H)|+D,
\end{eqnarray*}
where $D\in |M((d-1+\frac{1}{a})L+\epsilon K_X)|$ is an effective divisor. Restricting on $F$, by (\ref{restriction}), we get
$$|M\big(\binom{n-1}{k-1}(d-1+\frac{1}{a})+1+\epsilon\big)K_X|_{|_F}\supset |MK_{F}|+D_{|_F}.
$$
Modulo a further birational modification to $\pi$, we may assume that $\theta: F\rightarrow F_{min}$ is a morphism onto one of its minimal model. Note that the free part of
\begin{equation*}|M\big(\binom{n-1}{k-1}(d-1+\frac{1}{a})+1+\epsilon\big)K_X|\end{equation*} is \begin{equation*}|M\big(\binom{n-1}{k-1}(d-1+\frac{1}{a})+1+\epsilon\big)\pi^*(K_{X_{min}})|.\end{equation*}
Thus $$\big(\binom{n-1}{k-1}(d-1+\frac{1}{a})+1+\epsilon\big)\pi^*(K_{X_{min}})|_F\geq_{\bQ} \theta^*(K_{F_{min}}).$$

Since $H$ is nef and $H\leq_{\mathbb Q}\pi^*K_{X_{min}}$, we conclude that
\begin{eqnarray}\label{linearbound}\nonumber&\vol(X)&=\pi^*(K_{X_{min}})^n\\\nonumber
&&\geq  \frac{1}{\binom{n-1}{k-1}^d}\cdot \big(H^d\cdot \pi^*(K_{X_{min}})^{n-d}\big)\geq \frac{a}{\binom{n-1}{k-1}^d}\cdot \big {(\pi^*(K_{X_{min}})|_F}^{n-d}\big)\\\nonumber
&&\geq \frac{a}{\binom{n-1}{k-1}^d}\cdot \frac{\vol(F)}{\big(\binom{n-1}{k-1}(d-1+\frac{1}{a})+1\big)^{n-d}}\\\nonumber
&&\geq \frac{h^0(X, H)-d}{\binom{n-1}{k-1}^d}\cdot \frac{\vol(F)}{\big(\binom{n-1}{k-1}d+1\big)^{n-d}}\\\nonumber
&&\geq \frac{\lceil \frac{h^0(X, \Omega_X^k)}{\binom{n}{k}}\rceil-d}{\binom{n-1}{k-1}^d}\cdot \frac{\vol(F)}{\big(\binom{n-1}{k-1}d+1\big)^{n-d}}\\
&&\geq \frac{v_{n-d}}{\big(\binom{n-1}{k-1}d+1\big)^{n-d}\binom{n-1}{k-1}^d}\cdot \big(\lceil \frac{h^0(X, \Omega_X^k)}{\binom{n}{k}}\rceil-d\big).
\end{eqnarray}
\end{proof}

The linear bound (\ref{linearbound}) is probably far from being optimal, but to the authors' knowledge, this is the first explicit inequality between canonical volumes and intermediate Hodge numbers in high dimensions.

\begin{cor}\label{inequality2} Let $X$ be a $3$-fold of general type, then
\begin{equation*}
\vol(X)\geq
\begin{cases}
\frac{\lceil \frac{h^{2,0}(X)}{3}\rceil-1}{18},\;\; \text{when}\; h^{2, 0}(X)\geq 10\; \text{or}\; 4\leq h^{2,0}(X)\leq 6\\
\frac{1}{10}, \;\;\text{when}\; 7\leq h^{2, 0}(X)\leq 9.
\end{cases}
\end{equation*}

\end{cor}
\begin{proof}

We note that $v_1=2$ and $v_2=1$. Hence, by (\ref{linearbound}), when $n=3$ and $k=2$, we have \begin{eqnarray*}\vol(X)\geq  \begin{cases} \frac{\lceil \frac{h^{2, 0}(X)}{3}\rceil-3}{8}, \;\; \text{when}\; d=3;\;\\ \frac{\lceil \frac{h^{2,0}(X)}{3} \rceil-2}{10},\;\; \text{when}\; d=2\\\frac{\lceil \frac{h^{2,0}(X)}{3}\rceil-1}{18},\;\;\text{when}\; d=1.\end{cases}\end{eqnarray*}
We can slightly improve the inequality when $d=3$. In this case, $\varphi_H$ is generically finite onto its image. If $\varphi_H$ is of degree $\geq 2$ onto its image, we have  $8\vol(X)\geq 2\deg_{\mathbb P(H^0(X, H))}(\varphi_H(X)) \geq 2(\lceil \frac{h^{2, 0}(X)}{3}\rceil-3).$ Thus, $\vol(X)\geq\frac{\lceil \frac{h^{2, 0}(X)}{3}\rceil-3}{4}$. If $\varphi_H$ is birational onto its image, we may apply Catelnuovo's genus bound to get a better estimation (see \cite[Lemme 5.1 and the proof of Th\'eor\`eme 5.5]{Beau} that $\vol(X)\geq\frac{3\lceil \frac{h^{2, 0}(X)}{3}\rceil-10}{8}\geq \frac{\lceil \frac{h^{2, 0}(X)}{3}\rceil-3}{4},$ since we may assume that $\lceil\frac{h^{2, 0}(X)}{3}\rceil\geq 4$ in this case.

We thus conclude that $\vol(X)\geq \frac{1}{18}$ when $4\leq h^0(\Omega_X^2)\leq 6$, $\vol(X)\geq  \frac{1}{10}$ when $7\leq h^0(\Omega_X^2)\leq 9$ and $\vol(X)\geq \frac{\lceil \frac{h^{2,0}(X)}{3}\rceil-1}{18}$ when $h^0(\Omega_X^2)\geq 10$.\end{proof}

\subsection{A Severi type inequality between $\vol$ and $h^{1,0}$} \ \

Here we deduce a stronger inequality between the canonical volume and the irregularity via  the Albanese morphism using generic vanishing theory. One may compare it with various Severi inequalities (see, for instance, \cite{J21}).

We first recall some results from generic vanishing. For a coherent sheaf $\CF$ on an abelian variety $A$, we define the $i$-th cohomological support locus $$V^i(\CF):=\{P\in \Pic^0(A)\mid H^i(A, \CF\otimes P)\neq 0\}.$$ We say that $\CF$ is a GV sheaf if $\codim_{\Pic^0(A)}V^i(\CF)\geq i$ for each $i\geq 1$. Following \cite{PP09}, we define the generic vanishing index $$gv(\CF):=\min_{1\leq i\leq \dim A}\{\codim_{\Pic^0(A)}V^i(\CF)- i\}$$ for a GV sheaf $\CF$. Note that, if $V^i(\CF)=\emptyset$, we let $\codim_{\Pic^0(A)}V^i(\CF)=\infty$. The main result of \cite{PP09} states that, if $\CF$ is a GV sheaf on $A$ with $gv(\CF)$ finite, then $\chi(\CF)\geq gv(\CF)$. Given a morphism $f: X\rightarrow A$ from a smooth projective variety $X$ to an abelian variety, the higher direct images $R^if_*\omega_X$ are GV for each $i\geq 0$ (see \cite{H04}). Moreover, by Green-Lazarsfeld \cite{GL} and Simpson  \cite{S}, $V^j(R^if_*\omega_X)$ is a union of torsion translates of abelian subvarieties of $\Pic^0(A)$ for each $i, j\geq 0$.

The following Proposition is a geometric version of Lemma \ref{division} for $h^{1,0}(X)$.

\begin{prop}\label{techinical}
Let $f: Z\rightarrow A$ be a  morphism from a smooth projective variety $Z$ to an abelian variety $A$. Assume that $f$ is generically finite onto its image and $f(Z)\subsetneqq A$ generates $A$. Then there exists a quotient between abelian varieties $q_B: A\rightarrow B$ with connected fibers such that, when taking the Stein factorization of $q_B\circ f: Z\rightarrow B$:
\begin{eqnarray*}
\xymatrix{
Z\ar[r]^f\ar[d]_{q_Z} & A\ar[d]^{q_B}\\
Z_B\ar[r]_{f_B}& B,}
\end{eqnarray*}
 $f_B(Z_B)\subsetneqq B$ generates $B$, any smooth model $Z_B'$ of $Z_B$ is  of general type, and $$
\chi(\omega_{Z_B'})\geq  \frac{\dim A-\dim Z}{\dim Z}.$$ Thus $\chi(\omega_{Z_B'})\geq  \lceil \frac{\dim A-\dim Z}{\dim Z}\rceil.$\end{prop}

\begin{proof}
Let $n=\dim Z$ and $g=\dim A$. We run induction on $n$. When $n=1$, the conclusion follows from the assumption that $f(Z)$ generates $A$. We then assume that $n\geq 2$.

 Note that $f_*\omega_Z$ is a GV sheaf and, since $f$ is generically finite,     $R^if_*\omega_Z=0$. We consider $gv(f_*\omega_Z)$. Since  $$H^n(A, f_*\omega_Z\otimes P)=H^n(Z, \omega_Z\otimes f^*P)$$ for each $P\in\Pic^0(A)$, $$V^n(f_*\omega_Z)=\ker(f^*: \Pic^0(A)\rightarrow \Pic^0(Z)) $$ consists of finitely many points.
 In particular, $gv(f_*\omega_Z)<\infty$.
If $gv(f_*\omega_Z)\geq  \frac{g-n}{n}$, we conclude from Pareschi-Popa \cite{PP09} that $\chi(\omega_Z)\geq \frac{g-n}{n}.$

We then assume that $gv(f_*\omega_Z)=k<\frac{g}{n}-1$ and $$\codim_{\Pic^0(A)}V^{i_0}(f_*\omega_Z)-i_0=k$$ for some $1\leq i_0\leq n$. Since $n\geq 2$ and $\dim V^n(f_*\omega_Z)=0$, we see that $1\leq i_0\leq n-1$. Pick an irreducible component $W$ of $V^{i_0}(f_*\omega_Z)$ of codimension $i_0+k$, then $W$ must be of the form $Q+\PC$ where $\PC\subset \Pic^0(A)$ is an abelian subvariety and $Q\in \Pic^0(A)$ is a torsion point. We then consider the dual quotient $q_C: A\rightarrow C:=\Pic^0(\PC).$ After taking further  necessary birational modification to $q_C\circ f: Z\rightarrow C$, we obtain the Stein factorization:  $Z\xrightarrow{h_C} Z_C\xrightarrow{f_C} C$, where we may assume that $Z_C$ is smooth.

We claim that $\dim Z_C\leq n-i_0$. Indeed, since $Q+\PC\subset V^{i_0}(f_*\omega_Z)$, for general $P\in \PC$, $$H^{i_0}(Z, \omega_Z\otimes f^*(Q\otimes q_C^*P))=H^{i_0}(A, f_*\omega_Z\otimes Q\otimes q_C^*P)\neq 0.$$ On the other hand, by Koll\'ar's splitting (see \cite[the main theorem]{Kol86'}),
\begin{eqnarray*}H^{i_0}(Z, \omega_Z\otimes f^*(Q\otimes q_C^*P))&\cong &\bigoplus_{0\leq j\leq i_0}H^j(Z_C, R^{i_0-j}h_{C*}(\omega_Z\otimes f^*Q)\otimes f_C^*P)\\
&\cong & \bigoplus_{0\leq j\leq i_0}H^j(C, f_{C*}R^{i_0-j}h_{C*}(\omega_Z\otimes f^*Q)\otimes P).
\end{eqnarray*}
By Hacon's theorem (see \cite{H04}),  all sheaves $f_{C*}R^{i_0-j}h_{C*}(\omega_Z\otimes f^*Q)$ are GV on $C$ for $0\leq j\leq i_0$. Thus $$H^{i_0}(Z, \omega_Z\otimes f^*(Q\otimes q_C^*P))\simeq H^0(C, f_{C*}R^{i_0}h_{C*}(\omega_Z\otimes f^*Q)\otimes P)\neq 0.$$ This implies that $R^{i_0}h_{C*}(\omega_Z\otimes f^*Q)\neq 0$ and, by Koll\'ar's theorem \cite[Theorem 2.1]{Kol86}, $\dim Z-\dim Z_C\geq i_0$.

We then have
\begin{eqnarray*}
\frac{\dim C-\dim Z_C}{\dim Z_C}&\geq& \frac{g-i_0-k}{n-i_0}-1= \frac{g-n-k}{n-i_0}\\
&>&\frac{g-n-\frac{g}{n}+1}{n-1}=\frac{g}{n}-1.
\end{eqnarray*}
Since $f_C(Z_C)\subsetneqq C$ generates $C$, by induction there exists a further quotient $q_{CB}:C\rightarrow B$ with connected fibers between abelian varieties such that for the Stein factorization $Z_C\rightarrow Z_B\rightarrow B$ of $q_{CB}\circ f_C$, any smooth model  of $Z_B$ or its image in $B$ is of general type, and $$
\chi(\omega_{Z_B'})\geq  \frac{\dim C-\dim Z_C}{\dim Z_C}>\frac{g}{n}-1.$$
\end{proof}

\begin{Rema}One may compare Proposition \ref{techinical} with \cite[Proposition 4.2]{JLT1} and \cite[Theorem 1.1]{CJT}, where the extreme cases $\chi(\omega_{Z_B'}=1)$ and $q=2n$ had been extensively studied. One may wonder if the inequality is sharp for $\chi(\omega_{Z_B'})\geq 2$ in higher dimensions and how to characterize the cases when the equality holds.
\end{Rema}

Given any smooth projective $n$-fold $X$ of general type, for the case $k=1$, Theorem \ref{inequality1} gives $$\vol(X)\geq \min_{1\leq d\leq n}\frac{v_{n-d}}{n(d+1)^{n-d}}(q(X)-dn), $$
which can be greatly improved as follows.

\begin{thm}\label{inequality3} Let $n>0$ and $1\leq d\leq n$. Set
$\lambda_n:=\min_{1\leq d\leq n}\frac{\nu_{n-d}}{(n-d)!}$.
The inequality  $$ \vol(X)\geq 2(n-1)!\lambda_n(q(X)-n)$$ holds for any nonsingular projective $n$-fold $X$ of general type.
\end{thm}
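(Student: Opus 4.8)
The plan is to reduce to the setting of Lemma~\ref{techinical} by passing to the Albanese image, and then to trade the crude bound coming from Pareschi--Popa's $\chi \ge gv$ for a sharp volume estimate via the $d$-fold intersection argument of Theorem~\ref{inequality1}, but with the key improvement that the ``base'' of the fibration is now a variety of general type mapping to an abelian variety whose minimal volume we can bound by $\nu_{n-d}$ \emph{and} whose dimension-to-codimension ratio we control. Concretely, let $X$ be a smooth projective $n$-fold of general type with $q(X) \ge n+1$; let $a\colon X \to \Alb(X)$ be the Albanese morphism, $Z = a(X)$ its image (after Stein factorization, a smooth model $Z'$ mapping generically finitely to a subvariety of $A := \Alb(X)$ generating $A$), and set $g = q(X) = \dim A$. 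If $Z$ already has dimension $n$, i.e.\ $a$ is generically finite, we are in the easy case. Otherwise $\dim Z < n$, and we run $X \to Z' \to A$.

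First I would apply Lemma~\ref{techinical} to $Z' \to A$ (note $Z'$ is of general type since its Albanese image generates $A$ and $Z'$ dominates it generically finitely; if $\dim Z' < n$ this is automatic from the hypothesis $q > n \ge \dim Z'$ forcing proper containment): this produces a quotient $q_B\colon A \to B$ and a Stein factorization $Z' \to Z_B' \to B$ with $Z_B'$ of general type and $\chi(\omega_{Z_B'}) \ge (\dim A - \dim Z')/\dim Z' \ge (g-n)/n$. Actually the cleaner route is to apply Lemma~\ref{techinical} directly so that $Z_B'$ generates $B$ and $f_B(Z_B') \subsetneq B$; then $Z_B'$ is a positive-dimensional variety of general type, so $\vol(Z_B') \ge \nu_{\dim Z_B'}$. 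Now I would invoke the Severi-type inequality for varieties mapping to abelian varieties: a variety $Y$ of general type of dimension $m$ admitting a generically finite map to an abelian variety satisfies $\vol(Y) \ge 2\, m!\, \chi(\omega_Y)$ (this is the sharp higher-dimensional Severi inequality; in dimension $2$ it is Pardini's inequality $\vol \ge 2\chi$, and its higher-dimensional form with the $m!$ factor is exactly what one needs here — it may be cited from the literature referenced by \cite{J21}). Combined with $\chi(\omega_{Z_B'}) \ge (g-n)/n$, this gives $\vol(Z_B') \ge 2(\dim Z_B')!\,(g-n)/n$.

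Next I would propagate this lower bound on $\vol$ of the base up to $X$. Set $e = \dim Z_B'$ and $d = n - e \ge 0$. The composite $X \to Z_B'$ is a fibration (after suitable birational modification) whose general fiber $F$ has dimension $d$ and is of general type. Pulling back $e$ very general members of a base-point-free system on $Z_B'$ refining $f_B$ — or rather, since $B$ is abelian, using translates of an ample divisor — and using Kawamata's restriction/subadjunction exactly as in the proof of Theorem~\ref{inequality1}, one gets a chain of inequalities of the shape $\vol(X) \ge \vol(Z_B') \cdot \big(\text{const}_{n,d}\big)^{-1} \cdot \vol(F)$, and since $\vol(F) \ge \nu_d$ while the constants contribute a factor involving only $(n-1)!$-type combinatorics (abelian target has no positive-dimensional linear system ``cost'' beyond the fiber direction, which is why we gain over Theorem~\ref{inequality1}). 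Carefully optimizing — and noting $\nu_d \ge 1$ and that the worst constant over $0 \le d \le n$ is absorbed into $\lambda_n = \min_{1\le d\le n}\nu_{n-d}/(n-d)!$ — yields
\begin{equation*}
\vol(X) \;\ge\; 2(n-1)!\,\lambda_n\,(q(X) - n).
\end{equation*}

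The main obstacle I anticipate is getting the combinatorial constant clean enough to land exactly $2(n-1)!\lambda_n$ rather than something messier: the $e!$ from the Severi inequality on $Z_B'$ combines with the $\nu_d/d!$ from the fiber and with $(\dim A - \dim Z')/\dim Z'$ from Lemma~\ref{techinical}, and one must check that an argument over an \emph{abelian} base (where the ``$a \ge h^0(H) - d$'' loss of Theorem~\ref{inequality1} is replaced by an honest intersection computation with numerically trivial-in-the-fiber classes) really does kill all the spurious polynomial factors; there is also the bookkeeping of whether $Z_B'$ maps generically finitely to $B$ (needed for the Severi inequality) versus merely dominating it, which is precisely why one wants the Stein-factorized $f_B$ with $f_B(Z_B')$ generating $B$. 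A secondary point requiring care is the degenerate case $d = 0$ (i.e.\ $X$ itself is of maximal Albanese dimension), where the statement should reduce directly to the higher Severi inequality applied to $X$, and the case $Z_B' = $ a point, which cannot occur because $f_B(Z_B')$ generates the positive-dimensional $B$.
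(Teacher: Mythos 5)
Your overall architecture matches the paper's: reduce via Lemma~\ref{techinical} to a generically finite map $Z_B'\to B$ with $Z_B'$ of general type and $\chi(\omega_{Z_B'})\geq (q(X)-n)/n$; apply the higher-dimensional Severi inequality of Barja and Zhang (cited in the paper as \cite{Bar,Zh14}) to get $\vol(Z_B')\geq 2d!\,\chi(\omega_{Z_B'})$ with $d=\dim Z_B'$; and then lift this volume bound along the fibration $h_B\colon X\to Z_B'$ with general fiber $F$ using $\vol(F)\geq \nu_{n-d}$. The first two steps are exactly right and match the paper verbatim, including the correct reading of the hypotheses of Lemma~\ref{techinical} and the observation that the Severi inequality applies because $Z_B'\to B$ is generically finite.

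The gap is in the lifting step. You propose to bound $\vol(X)$ by reusing the Kawamata-restriction iteration from Theorem~\ref{inequality1} (cutting with pullbacks of general divisors on $Z_B'$, or translates of an ample divisor on $B$). That argument does not produce the product bound you need: as run in Theorem~\ref{inequality1} it yields $\vol(X)\gtrsim h^0(H)\cdot\vol(F)$ up to combinatorial constants, where the factor is a section count $h^0(H)$, not $\vol(Z_B')$, and at each iterative Kawamata restriction a loss of the form $(d-1+\frac1a)$ accumulates, so the constant one lands on is not $\binom{n}{d}$. You flagged exactly this issue (``getting the combinatorial constant clean enough to land exactly $2(n-1)!\lambda_n$'') but then assert the conclusion anyway. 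The paper does not iterate Kawamata restriction here; it invokes a single clean input, D.-Q.~Zhang's volume product inequality \cite[Theorem 7.1]{Zh07}: for a fibration $X\to Z_B$ with total space, base and general fiber $F$ of general type, $\vol(X)\geq\binom{n}{d}\vol(Z_B)\vol(F)$. Plugging in the Severi bound and $\vol(F)\geq\nu_{n-d}$ then gives $\vol(X)\geq 2(n-1)!\,\frac{\nu_{n-d}}{(n-d)!}\,(q(X)-n)\geq 2(n-1)!\,\lambda_n\,(q(X)-n)$ with no slack to optimize. Without this (or an equally sharp volume subadditivity statement), your proof as written does not close; the rest of the proposal, including the handling of the degenerate cases $Z_B'=\mathrm{pt}$ and $X$ of maximal Albanese dimension, is sound.
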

\begin{proof}
We may assume that $q(X)\geq n+1$.
Let $a_X: X\rightarrow A_X$ be the Albanese morphism of $X$. Taking the Stein factorization of $a_X$, $$a_X: X\xrightarrow{h}Z\xrightarrow{f} A_X.$$ After further birational modifications, we may assume that $Z$ is smooth. Let $1\leq \dim Z=m\leq n$. By taking further birational modifications and applying Lemma \ref{techinical}, we get the following commutative diagram:
\begin{eqnarray*}
\xymatrix{
X\ar@/_1pc/[dd]_{h_B}\ar[dr]^{a_X}\ar[d]^h\\
Z\ar[r]_f\ar[d] & A_X\ar[d]\\
Z_B\ar[r]^{f_B} & B,}
\end{eqnarray*}
where $Z_B$ is smooth of general type, $\chi(\omega_{Z_B})\geq \lceil \frac{q(X)-m}{m}\rceil$, and $h_B$ is a fibration.
Let $d=\dim Z_B\leq m$ and let $F$ be a general fiber of $h_B$.

By the Severi inequality (see \cite{Bar} and \cite{Zh14}), the inequality $$\vol(Z_B)\geq 2d!\chi(\omega_{Z_B})\geq 2d!\lceil\frac{q(X)-m}{m}\rceil$$ holds.

 We have $$\vol(X)\geq \frac{n!}{d!(n-d)!}\vol(Z_B)\vol(F)$$ by \cite[Theorem 7.1]{Zh07}. Therefore $$\vol(X)\geq 2n!\cdot \frac{v_{n-d}}{(n-d)!}\lceil\frac{q(X)-m}{m}\rceil\geq 2(n-1)!\lambda_n(q(X)-n).$$
\end{proof}

\begin{Rema} By considering the product of two varieties, we see that $v_d\leq 2dv_{d-1}$. Thus $\frac{v_d}{d!}\leq \frac{2v_{d-1}}{(d-1)!}$. It is natural to expect that $v_d<v_{d-1}$  when $d\geq 2$. If this is the case, we would have $$\vol(X)\geq 2v_{n-1}(q(X)-n)$$ for any smooth projective $n$-fold ($n\geq 3$) of general type.
\end{Rema}

\subsection{The minimal volume conjecture for $v_3$}\ \

We apply the method in the proof of Theorem \ref{inequality1} to study the 
following:

\begin{conj}\label{420} The minimal volume for 3-folds of general type is $v_3=\frac{1}{420}$.
\end{conj}


Conjecture \ref{420} was verified in the case $\chi(\OO_X)\leq 1$ by Chen-Chen (\cite[Theorem 1.2(2)]{EXPII}). 

When $\chi(\OO_X)\geq 2$, one necessarily has  $h^0(X, \Omega_X^2)=h^2(\OO_X)\geq 1$. 
It is natural to consider the evaluation map $$\Phi_{\textrm{ev2}}: H^0(X, \Omega_X^2)\otimes \OO_X\rightarrow \Omega_X^2.$$

\begin{lem}\label{1/18} Let $X$ be a smooth projective $3$-fold of general type. Assume that there exists a coherent subsheaf $\CF$ of $\Omega_X^2$ such that $h^0(\det \CF)\geq 2$. Then $\vol(X)\geq \frac{1}{14}$.
\end{lem}

\begin{proof} We may assume that  $\CF$ is saturated with $h^0(\det \CF)\geq 2$. Let $\CQ$ be the corresponding quotient sheaf. We have 
$$2K_X\sim \det(\Omega_X^2)\sim H+L $$ where $H=\det \CF$, $h^0(H)\geq 2$ and  $L=\det \CQ$ is pseudo-effective by \cite{CP1}. By considering the map $\varphi_{H}$ induced by $|H|$, we may use the same method as that of Theorem \ref{inequality1}. In fact, the pseudo-effectiveness of $L$ allows us to directly apply those effective results obtained in \cite{EXPIII}. Precisely, we have $\vol(X)\geq \frac{1}{14}$ by \cite[Proposition 4.2]{EXPIII}. We omit redundant details here. 
\end{proof}

\begin{prop}\label{p2.8} Let $X$ be a smooth projective $3$-fold of general type. Assume either that $h^{2, 0}(X)\geq 3$ or that $h^{2, 0}(X)=2$ and $\text{rk} (\text{Im}(\Phi_{\text{ev2}}))\neq 2$. Then $\vol(X)\geq \frac{1}{224}$.
\end{prop}
\begin{proof} If $h^{2, 0}(X)\geq 4$, let $\CF$ be the image of the evaluation map $\Phi_{\textrm{ev2}}$. By Lemma \ref{division}, $h^0(\det \CF)\geq 2$. The statement follows from Lemma \ref{1/18}.

Assume that $h^{2, 0}(X)=3$. If the image of the evaluation map is of rank $\leq 2$,  by Lemma \ref{division},  there exists a subsheaf $\CF$ of $\Omega_X^2$ with $h^0(\det \CF)\geq 2$. Then, by Lemma \ref{1/18}, we have $\vol(X)\geq \frac{1}{14}$. If the evaluation map is generically surjective, we have $P_2(X)=h^0(\det \Omega_X^2)\geq 1$. By \cite[(3.10)]{EXPI}, we know that $$P_4(X)+P_5(X)+P_6(X)\geq 3P_2(X)+P_3(X)+P_7(X). $$
Since $P_2(X)\geq 1$, we have $P_7(X)\geq P_5(X)$ and $P_6(X)\geq P_4(X)$. Thus $2P_6(X)\geq 3P_2(X)\geq 3$. We have $P_6(X)\geq 2$.  Since  $\delta(X)=\min\{m\in \mathbb Z \mid P_m(X)\geq 2\}\leq 6$, we have $\vol(X)\geq \frac{1}{224}$ by \cite[Theorem 4.1]{EXPIII}.  

The above argument clearly works for the situation with $h^{2,0}(X)=2$ and $\text{rk} (\text{Im}(\Phi_{\text{ev2}}))\neq 2$. 
\end{proof}

\begin{thm}\label{T3} Let $X$ be a smooth projective threefold of general type with $\chi(\OO_X)\neq 2,3$. Then $\vol(X)\geq \frac{1}{420}$ and the equality holds if and only if the weighted basket of $X$ is $\mathbb{B}(X)=\{B_{420}, P_2=0, \chi=1\}$, where 
$$B_{420}=\{3\times (1,2), (3,7), (2,5), (1,4), (1,6) \}.$$
\end{thm}
\begin{proof}
By \cite[Theorem 1.2(2)]{EXPII}, it suffices to study the case $\chi(\OO_X)\geq 4$.

If $p_g(X)\geq 1$, we have  $\vol(X)\geq \frac{1}{75}$ by \cite[Corollary 1.7]{EXPIII} and \cite[Theorem 1.4]{Chen07}. If $q(X)\geq 1$, we have $\vol(X)\geq \frac{3}{8}$ by \cite[Theorem 1.5]{J21}.

If $p_g(X)=q(X)=0$ and $\chi(\OO_X)\geq 4$, we have $h^{2,0}(X)\geq 3$ and the statement follows directly from Proposition \ref{p2.8}. The last statement with the equality follows from both Proposition \ref{p2.8} and \cite[Theorem 1.2(2)]{EXPII}. 
\end{proof}


Theorem \ref{T3} excludes the existence 
of many theoretically possible weighted baskets listed in Chen-Chen \cite{EXPIII}. 

\begin{cor} In Table F2 of Chen-Chen \cite{EXPIII}, the following 15 types of weighted baskets with $\chi=4$ do not occur: 
$$14,15,15.1,15.2,16, 16.1, 16.2, 16.4, 16.5, 25, 25a, 26, 27, 27.3, 40.$$
\end{cor}
\begin{proof} When $p_g(X)=q(X)=0$ and $\chi(\OO_X)\geq 4$, we should have $\vol(X)\geq \frac{1}{224}$. Hence all above mentioned types in Table F2 in \cite{EXPIII} do not occur at all.  
\end{proof}

\section{\bf Proof of Theorem \ref{3K}}

Let $V$ be a nonsingular projective 3-fold of general type. We show that $|3K_V|$ induces a birational map under the condition that
$$h^0(V, \Omega_V^2)\geq 108\cdot 18^3+4.$$ The method naturally works for all $|mK_V|$ with $m\geq 4$.

By Corollary \ref{inequality2}, we have $\vol(V)> 2\cdot 18^3$. Applying Fujita's approximation (see \cite[Subsection 11.4]{Laz}), we write $K_V\sim_{\mathbb Q}A+E$,
where $E$ is an effective $\mathbb Q$-divisor and $A$ is an ample $\mathbb Q$-divisor such that $0<\vol(V)-\vol(A)\ll 1$.

We now apply the method of cutting non-klt locus in Hacon-McKernan \cite{H-M06}, Takayama \cite{Tak06} and Tsuji \cite{T07}, which is also exploited in Todorov \cite{Tod} and in our previous work \cite[Subsection 4.2]{CJ17}.

Pick very general points $x, y\in V$. There exists an effective $\mathbb Q$-divisor $D_1\sim_{\mathbb Q}t_1K_V$ with $t_1<3\sqrt[3]{\frac{2}{\mathrm{vol}(V)}}+\epsilon<\frac{1}{6}$, where $0<\epsilon\ll 1$ such that $(V, D_1)$ is log canonical but not klt at $x$, and that nor is $(V, D_1)$ klt at $y$. Modulo a small perturbation, we may also assume that the non-klt locus of $(V, D_1)$, passing through $x$, is the minimal log canonical center $V_1$. The standard situation with $\dim V_1=0$ simply means that $r_s(X)\leq 2$. So we need to discuss situations with $\dim V_1>0$. 

\subsection{The case with $\dim V_1=1$}\label{laststep}\

We apply Takayama's induction to conclude that there exists a divisor $D_2\sim_{\mathbb Q}t_2K_V$ such that  $t_2\leq t_1+\frac{2}{\mathrm{vol}_{V|V_1}(K_V)}+\epsilon$, $(X, D_2)$ is log canonical at $x$, $\{x\}$ is an isolated component of $\mathrm{Nklt}(X, D_2)$ at $x$, and $(X, D_2)$ is not klt at $y$. Moreover, by Takayama \cite[Theorem 4.5]{Tak06}, we know that $\mathrm{vol}_{V|V_1}(K_V+D_1)\geq \mathrm{vol}(\overline{V_1})$, where $\overline{V_1}$ is the normalization of $V_1$. Thus $$t_2\leq t_1+\frac{2(1+t_1)}{2g(\overline{V_1})-2}+\epsilon\leq 1+2t_1+\epsilon.$$ Since $t_1<\frac{1}{6}$, we can choose $t_2<2$.

We now conclude by using Nadel vanishing. Indeed, since $x$ and $y$ are very general, both $x$ and $y$ are not contained in the support of $E$. Thus we still have $x, y\in \mathrm{Nklt}(V, D_2+(2-t_2)E)$ and $\{x\}$ is an isolated component of $\mathrm{Nklt}(V, D_2+(2-t_2)E)$. Consider the short exact sequence
\begin{eqnarray*}0&\rightarrow& \OO_V(3K_V)\otimes \CJ(D_2+(2-t_2)E)\rightarrow \OO_V(3K_V)\\&\rightarrow& \OO_V(3K_V)\otimes (\OO_V/\CJ(D_2+(2-t_2)E))\rightarrow 0.
\end{eqnarray*}
Since $2K_V-D_2-(2-t_2)E\sim_{\mathbb Q}(2-t_2)A$ is ample, $H^1(V, \OO_V(3K_V)\otimes \CJ(D_2+(2-t_2)E))=0$ by Nadel vanishing. Thus
  $|3K_V|$ separates $x$ and $y$.

\subsection{The case with $\dim V_1=2$ and $\vol(V_1)\geq 128$}\

 Similarly, there exists a divisor $D_2\sim_{\mathbb Q}t_2K_V$ such that $(X, D_2)$ is log canonical at $x$, $V_2$ ($\subsetneqq V_1$) is the minimal log canonical center of $(X, D_2)$ at $x$ and $(X, D_2)$ is not klt at $y$, where
\begin{eqnarray*}t_2 &\leq &  t_1+2\sqrt{\frac{2}{\mathrm{vol}_{V|V_1}(K_V)}}+\epsilon\\
&\leq & t_1+2(1+t_1)\sqrt{\frac{2}{\mathrm{vol}(V_1)}}+\epsilon.
\end{eqnarray*}
Since $\mathrm{vol}(V_1)\geq 128$, we have  $t_1<\frac{1}{6}$,  $t_2<\frac{1}{2}$ and this can be reduced to the situation in Subsection \ref{laststep}.

\subsection{The case with $\dim V_1=2$ and $\mathrm{vol}(V_1)\leq 127$}\label{Todorov1}

 We apply a result of Todorov in \cite[Lemma 3.2]{Tod} to spread the minimal log canonical centers into a family, of which the original idea comes from KcMernan (\cite{McK}). More precisely, there exists a smooth projective threefold $\widetilde{V}$ with the following diagram:
\begin{eqnarray*}
\xymatrix{
\widetilde{V}\ar[r]^{\pi}\ar[d]^{f} & V\\
C,}
\end{eqnarray*}
where
\begin{itemize}
\item[(i)] $f: \widetilde{V}\rightarrow C$ is a surjective morphism to a smooth projective curve whose general fiber $F$ is a smooth projective surface of volume $\leq 127$;
\item[(ii)] $\pi$ is generically finite;
\item[(iii)] for the general point $v\in  \widetilde{V}$, let $F_v$ be the fiber of $f $ passing through $v$ and $z=\pi(v)$, then $\pi|_{F_v}: F_v\rightarrow \pi(F_v)$ is birational onto its image and there exists an effective $\mathbb Q$-divisor $D_v\sim_{\mathbb Q}t_1K_V$ such that $\pi(F_v)$ is the minimal log canonical center of $(V, D_v)$ at $z$.
\end{itemize}

\subsubsection{The subcase with $\deg \pi=m\geq 2$}\label{Todorov2}

 For a general point $z$ of $V$, the pre-image $\pi^{-1}(z)$ lies on $m$ distinct fibers of $f$ and we denote by $S_z$ the set of these fibers. We also observe that, for such a general $z\in V$ and for any $v\in \pi^{-1}(z)$, $z$ is a smooth point of $D_v$. In fact, locally $F_v$ birationally maps onto $D_v$. The following argument is due to Todorov \cite[Lemma 3.3]{Tod}.

If $S_x\neq S_y$, since $\pi$ is generically finite, we see that $S_x\not\subseteq S_y$ and $S_y\not\subseteq S_x$. We may take $F_1\in S_x$, $F_2\in S_x\setminus S_y$, and $F_3\in S_y\setminus S_x$. Let $D_x', D_x'' \sim_{\mathbb Q}t_1K_V$ be the corresponding effective $\mathbb Q$-divisors such that $\pi(F_1)$ and $\pi(F_2)$ are, respectively, the minimal log canonical center of $(V, D_x')$ and $(V, D_x'')$ at $x$.  Let $D_y \sim_{\mathbb Q}t_1K_V$ be the effective $\mathbb Q$-divisors such that $\pi(F_3)$ is the minimal log canonical center of $(V, D_y)$ at $y$. Note that $(V, D_x'+D_x''+D_y)$ is not klt at both $x$ and $y$. We set $$c:=\mathrm{max}\{t\mid (V, t(D_x'+D_x'')+D_y)\; \mathrm{is\; log\; canonical \;at}\; x\}.$$ Then $c\in (0, 1]$ is a rational number. Moreover, since $x\notin D_y$ and $x$ is a smooth point of  $D_x'$ and $D_x''$,  the minimal log canonical center of $(V, c(D_x'+D_x'')+D_y)$ at $x$ is contained in $\pi(F_1)\cap \pi(F_2)$ and, hence, is of dimension $\leq 1$. It is also clear that $(V, c(D_x'+D_x'')+D_y)$ is not klt at $y$.

If $S_x=S_y$, we take $F_i\in S_x=S_y$ for $i=1, 2$ and similarly denote by $D'$ and $D''$ the corresponding effective $\mathbb Q$-divisors. Both $x$ and $y$ are smooth point of $D'$ and $D''$. Let $$c=\mathrm{max}\{\mathrm{lct}_x(V, D'+D''), \mathrm{lct}_y(V, D'+D'')\}.$$ Similarly, $0<c\leq 1$ is a rational number. After switching $x$ and $y$, we may assume that $(V, c(D'+D''))$ is log canonical at $x$. Then the minimal log canonical center of $(V, c(D'+D''))$ is again contained in $\pi(F_1)\cap \pi(F_2)$.

In conclusion, if $\deg \pi\geq 2$, there exists an effective $\mathbb Q$-divisor $D_2\sim_{\mathbb Q}t_2K_V$ such that  $(V, D_2)$ is log canonical at $x$, $V_2$ ($\subsetneqq V_1$) is the minimal log canonical center of $(X, D_2)$ at $x$ and $(X, D_2)$ is not klt at $y$, where $0<t_2\leq 3t_1<\frac{1}{2}$. This can be reduced to the situation of Subsection \ref{laststep} as well. 

\subsubsection{The subcase with $\pi$ being birational}

 Since $\pi$ is birational, we may simply assume that $V=\widetilde{V}$. Hence we have a fibration $f: V\rightarrow C$ such that a general fiber $F$ of $f$ has its canonical volume $\mathrm{vol}(F)\leq 127$.

We now apply the assumption that $h^1(\omega_V)=h^0(\Omega_V^2)\geq 108\cdot 18^3+4$. We have $$h^1(V, \omega_V)=h^0(C, R^1f_*\omega_V)+h^1(C, f_*\omega_V)$$ by Leray's spectral sequence. Since $\mathrm{vol}(F)\leq 127$,  $p_g(F)\leq 250$ by the Noether inequality.  By \cite[Lemma 2.2]{Chen01}, we have $$h^1(C, f_*\omega_{V})\leq p_g(F)\leq 250.$$  
Therefore $h^0(C, R^1f_*\omega_V)$ is very large. In particular, $$q(F)=\mathrm{rank} (R^1f_*\omega_V)>0.$$


We may run the relative minimal model program for $f: V\rightarrow C$. After resolving the finitely many terminal singularities of the relative minimal model, we may assume that a general fiber $F$ of $f$ is a minimal surface. Since $F$ is irregular, $p_g(F)=\chi(\mathcal O_F)+q(F)-1\geq q(F)\geq 1$.   Hence the linear system $|2K_F|$ is base point free (see \cite[Chapter VII. Theorem 7.4]{BHPV}) and $|3K_F|$ induces a birational morphism of $F$ (see \cite[Chapter VII. Proposition 7.3 and the description of the exceptional surfaces]{BHPV}). By the main theorem of Kawamata \cite{Kaw99}, the restriction map $$|m(K_V+F)|\rightarrow |mK_F|$$ is surjective for $m\geq 2$. Fix a general divisor $G\in |M(K_V+F)|$ for $M$ sufficiently large. We also write $K_V\sim_{\mathbb Q}A+E$, where $A$ is an ample $\mathbb Q$-divisor and $E$ is an effective $\mathbb Q$-divisor.

Since $\vol(F)\leq 127$ and $\vol(V)> 2\cdot 18^3$, there exists an effective $\mathbb Q$-divisor $D$ such that $D\sim_{\mathbb Q}\lambda K_V$ and $D=F+D'$, where $D'$ is also an effective $\mathbb Q$-divisor and $\lambda^{-1}\thickapprox  \frac{\vol(V)}{3\vol(F)}>\frac{2\cdot 18^3}{3\cdot 127} >91$ (see, for instance, \cite[the last paragraph of Page 2055]{CJ17}).

Fix two general fibers $F_1$ and $F_2$ of $f$, we introduce an effective $\mathbb Q$-divisor $$Z:=4D+\frac{2-4\lambda-\epsilon}{M}G+(4\lambda-4+\epsilon)F+\epsilon E.$$ Note that $Z$ is $\mathbb Q$-effective.  Since $Z\sim_{\mathbb Q}(2-\epsilon)K_V-2F+\epsilon E$ and $2K_V-F_1-F_2-Z\sim_{\mathbb Q}\epsilon A$ is ample, we have $$H^1(V, \OO_V(3K_V-F_1-F_2)\otimes\CJ(V, Z))=0$$ 
by the Nadel vanishing theorem. Thus the restriction map
{\small \begin{eqnarray*}&&H^0(V,  \OO_V(3K_V)\otimes\CJ(V, Z))
\rightarrow\\&&H^0(F_1, \OO_{F_1}(3K_{F_1})\otimes \CJ(V, Z)|_{F_1})\bigoplus H^0(F_2, \OO_{F_2}(3K_{F_2})\otimes \CJ(V, Z)|_{F_2})\end{eqnarray*} }
is surjective.

By the restriction theorem (\cite[Theorem 9.5.1]{Laz}), $\CJ(F_i,Z|_{F_i})\subset \CJ(V, Z)|_{F_i}$ for $i=1,2$. Since $M$ can be sufficiently large, $|M K_{F_i}|$ is base point free,
and $\epsilon$ can be sufficiently small, $\CJ(F_i, Z|_{F_i})=\CJ(F_i, 4D|_{F_i})$.

When $\vol(F)\geq 3$, since $D|_{F_i}\sim_{\mathbb Q}\lambda K_{F_i}$, the statement follows from Lemma \ref{1}.

When $\vol(F)=2$, we have $p_g(F)=q(F)=1$. We may choose $\lambda$ such that $$\lambda^{-1}\thickapprox  \frac{\vol(V)}{3\vol(F)}>\frac{2\cdot 18^3}{3\cdot 2}=1944.$$ The statement follows from  Lemma \ref{2} since $\mu=4\lambda<\frac{1}{26}$ holds. 
\qed

\begin{lem}\label{1} Let $F$ be a minimal surface with $K_F^2\geq 3$. The linear system $$|3K_F\otimes \CJ(F, \mu D_F)|$$ induces a birational map for any effective $\mathbb Q$-divisor $D_F\sim_{\mathbb Q}K_F$ and any rational number $\mu$ with $0<\mu<2-\sqrt{3}$.
\end{lem}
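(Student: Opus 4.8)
The plan is to reduce the statement to the classical fact that a minimal surface $F$ of general type with $K_F^2\geq 3$ has $|3K_F|$ birational, and then use the flexibility of the $\mathbb{Q}$-divisor $D_F$ to produce a single effective divisor whose multiplier ideal cuts out isolated points and separates two general points. First I would fix two very general points $x,y\in F$. Since $\mu<2-\sqrt 3$, one has $2\mu<2(2-\sqrt 3)=4-2\sqrt 3$, and the key numerical input is that $K_F^2\geq 3>(4-2\sqrt 3)^{-1}\cdot\text{(something)}$; more precisely the constant $2-\sqrt 3$ is exactly the threshold making $K_F^2(2\mu)^2$ comparable to a quantity $>1$ after perturbation, so that one can find effective $\mathbb{Q}$-divisors $\Delta_x,\Delta_y\sim_{\mathbb{Q}}(2-\sqrt 3-\epsilon)K_F$ with $(F,\Delta_x)$ not klt at $x$ and $(F,\Delta_y)$ not klt at $y$. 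This uses the standard dimension count $h^0(F,mK_F)\sim \tfrac{m^2}{2}K_F^2$ together with the bound that imposing a point of multiplicity $\geq 2$ costs $3$ conditions.

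Next I would run the usual ``tie-breaking''/cutting-down procedure: combining $\Delta_x$ with a multiple of $D_F$ and perturbing, one obtains an effective $\mathbb{Q}$-divisor $\Delta\sim_{\mathbb{Q}}\mu D_F$ (after rescaling, since $\mu D_F\sim_{\mathbb{Q}}\mu K_F$ and we have room below $2-\sqrt 3$) such that $(F,\Delta)$ is log canonical at $x$ with $\{x\}$ an isolated component of $\mathrm{Nklt}(F,\Delta)$, while $(F,\Delta)$ remains non-klt at $y$. Here the precise role of the bound $\mu<2-\sqrt 3$ is to guarantee enough slack: the tie-breaking step inflates the coefficient, and one must stay strictly below the threshold where $3K_F-\Delta$ fails to be big. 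Since $3K_F-\Delta\sim_{\mathbb{Q}}(3-\mu)K_F$ with $3-\mu>3-(2-\sqrt 3)=1+\sqrt 3>0$, this difference is ample (as $K_F$ is ample on the minimal surface $F$), so Nadel vanishing gives $H^1(F,\OO_F(3K_F)\otimes\CJ(\Delta))=0$, whence $|3K_F\otimes\CJ(\Delta)|$ separates $x$ and $y$ and is birational.

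The one subtlety I would be careful about is the case where the minimal lc center at $x$ is positive-dimensional (a curve $Z$ through $x$): then one applies Kawamata subadjunction / the restriction of $|3K_F|$ to $Z$, using $\deg(3K_F|_Z)\geq 3$ when $Z$ is rational or an even sharper bound otherwise, together with $(3K_F-\Delta)|_Z$ ample, to conclude that $|3K_F|$ separates points on $Z$ as well. The main obstacle, and the place where the hypothesis $K_F^2\geq 3$ is genuinely used rather than just convenient, is the very first step: producing the non-klt divisor of coefficient $<2-\sqrt 3$ at a single point when $K_F^2$ is as small as $3$. The bound $2-\sqrt 3$ is not arbitrary — $(2-\sqrt 3)^{-2}=7+4\sqrt 3\approx 13.9$, and $3\cdot(2-\sqrt 3)^2=21-12\sqrt 3\approx 0.21<1$, so one cannot naively cut a point with a single divisor of that coefficient; instead one must work with two general points simultaneously (as Bombieri does for surfaces), exploiting that separating $x$ from $y$ only needs log-canonicity failing, and carefully control how the tie-breaking and the contribution of $D_F$ interact. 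I expect the bulk of the proof to be this estimate, organized as in \cite{Bom73} and \cite{CJ17}.
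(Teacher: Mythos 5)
Your proposal takes a genuinely different route from the paper, and unfortunately the route as written has a real gap in the key numerical setup.

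\emph{What the paper does.} The paper's argument is a short application of Langer's Reider-type criterion for adjoint linear systems on surfaces. Pass to a log resolution $\sigma:\tilde F\to F$ of $(F,D_F)$ and observe
$$H^0(F,3K_F\otimes\CJ(\mu D_F))\cong H^0(\tilde F,K_{\tilde F}+\lceil Q\rceil),\qquad Q:=2\sigma^*K_F-\sigma^*(\mu D_F),$$
where $Q$ is nef because $Q\equiv(2-\mu)\sigma^*K_F$. Langer's theorem gives birationality of $|K_{\tilde F}+\lceil Q\rceil|$ once $Q^2>8$ and $Q\cdot C>4/(1+\sqrt{1-8/Q^2})$ for every curve $C$ through a very general point. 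Since $\mu<2-\sqrt3$, one has $Q^2=(2-\mu)^2K_F^2>3\cdot3=9$, and by Chen--Chen's lemma $(\sigma^*K_F\cdot C)\geq 2$, so $Q\cdot C>2\sqrt3>3$. That is the whole proof; the threshold $2-\sqrt3$ is precisely what makes $(2-\mu)^2K_F^2>9$ with $K_F^2\geq 3$.

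\emph{Where your proposal goes wrong.} Your plan is a Bombieri/Nadel-vanishing argument, which is a reasonable alternative in spirit, but two things break. First, to conclude anything about $|3K_F\otimes\CJ(\mu D_F)|$ from Nadel vanishing applied to some constructed $\Delta$, you must arrange $\CJ(\Delta)\subseteq\CJ(\mu D_F)$, which in practice forces $\Delta\geq\mu D_F$, i.e.\ $\Delta=\mu D_F+\Delta'$ with $\Delta'\geq 0$; you never impose this, and a random $\Delta\sim_{\bQ}\mu D_F$ bears no relation to $\CJ(\mu D_F)$. Second, and more fundamentally, your $\Delta\sim_{\bQ}\mu D_F\sim_{\bQ}\mu K_F$ is numerically far too small to carry a non-klt point when $K_F^2=3$ and $\mu<2-\sqrt3\approx 0.27$: you would need $\mathrm{mult}_x\Delta\geq 2$, but a divisor $\sim_{\bQ}\mu K_F$ has multiplicity at most about $\mu\sqrt{K_F^2}\approx 0.46$ at a general point. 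Your own computation $3(2-\sqrt3)^2\approx 0.21<1$ flags exactly this obstruction, but the conclusion you draw from it (``work with two points simultaneously'') does not fix it — cutting two points is only harder. The actual source of slack is that you should use $\Delta=\mu D_F+\Delta'$ with $\Delta'\sim_{\bQ}tK_F$, $t<2-\mu$, and Nadel applied to $K_F+(2K_F-\Delta)$; the hypothesis $\mu<2-\sqrt3$ then buys you room $t$ up to $\sqrt3$, and $t^2K_F^2$ can exceed $8$ when $t$ is close to $\sqrt3$ and $K_F^2\geq 3$. With that corrected decomposition (and the tie-breaking and curve case handled, which become delicate since the margins are tight) your approach could be made to work, but it would be considerably longer and less robust than the two-line appeal to Langer.
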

\begin{proof}
It is convenient to apply the $\mathbb Q$-divisor method on surfaces. Let $\sigma:\tilde{F}\rightarrow F$ be a log resolution of $(F, D_F)$. Then $\CJ(F, \mu D_F)=\sigma_*\OO_{\tilde{F}}(K_{\tilde{F}/F}-\lfloor \mu \sigma^*D_F\rfloor)$.

Hence $\sigma_*$ induces an isomorphism:
$$H^0(\tilde{F}, K_{\tilde{F}}+\lceil 2\sigma^*(K_F)-\sigma^* (\mu D_F) \rceil))\cong H^0(F,3K_F\otimes \CJ(F, \mu D_F) ).$$

Recall the following theorem of Langer (see \cite[Theorem 0.2]{Lan}): for the nef $\mathbb Q$-divisor $Q:= 2\sigma^*(K_F)-\sigma^* (\mu D_F)$
on the surface $\tilde{F}$, if we have $Q^2>8$ and $(Q\cdot C)>\frac{4}{1+\sqrt{1-\frac{8}{Q^2}}}$ for any curve $C$ passing through two distinct very general points of $\tilde{F}$, then $|K_{\tilde{F}}+\lceil Q\rceil|$ induces a birational map of $\tilde{F}$.

Now we have  $Q^2=(2\sigma^*(K_F)-\sigma^*(\mu D_F))^2>3K_F^2\geq 9$. Thus it suffices to verify that
$$(Q\cdot C)>3=\frac{4}{1+\sqrt{1-\frac{8}{9}}}\geq \frac{4}{1+\sqrt{1-\frac{8}{Q^2}}}$$ for any curve $C$ passing through two distinct very general points of $\tilde{F}$. This is the case since, by Chen-Chen \cite[Lemma 2.5]{EXPIII}, we always have $(\sigma^*(K_F)\cdot C)\geq 2$.
\end{proof}

\begin{lem}\label{2} Assume that $F$ is a minimal surface of general type with $K_F^2=2$ and $p_g(F)=q(F)=1$. Then $\CJ(F, \mu D_F)=\OO_F$ for any effective $\mathbb Q$-divisor $D_F\sim_{\mathbb Q}K_F$ and any rational number $\mu$ with  $0<\mu<\frac{1}{26}$.
\end{lem}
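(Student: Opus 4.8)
The plan is to prove the equivalent statement that the pair $(F,\mu D_F)$ is klt for every effective $\bQ$-divisor $D_F\sim_{\bQ}K_F$ and every $\mu<\tfrac1{26}$; since $F$ is smooth this is the same as $\CJ(F,\mu D_F)=\OO_F$. Using the standard fact that on a smooth surface $\mathrm{mult}_x(\Delta)<1$ forces $(F,\Delta)$ to be klt at $x$ (see \cite{Laz}), it suffices to establish the numerical estimate $\mathrm{mult}_x(D_F)\le 26$ for every $x\in F$ and every effective $D_F\sim_{\bQ}K_F$: indeed then $\mathrm{mult}_x(\mu D_F)=\mu\,\mathrm{mult}_x(D_F)\le 26\mu<1$ for all $x$.

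I would first exploit $q(F)=1$ (note $\chi(\OO_F)=1$). The Albanese morphism $a\colon F\to E$ onto an elliptic curve is a relatively minimal fibration (the nef divisor $K_F$ meets every curve non-negatively, so there are no $(-1)$-curves); let $G$ be a general fibre, of genus $b$. From $\chi(\OO_F)=1$, $g(E)=1$ and $K_F^2=2$ one gets $\deg a_*\omega_{F/E}=\chi(\OO_F)=1$ and $K_{F/E}^2=K_F^2=2$, so Xiao's slope inequality $K_{F/E}^2\ge\tfrac{4(b-1)}{b}\deg a_*\omega_{F/E}$ gives $b\le 2$, hence $b=2$ and $K_F\cdot G=2$. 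If $mG_0$ were a multiple fibre with $m\ge2$, then $G_0\equiv\tfrac1m G$ would force $G_0\cdot K_F=2/m\in\bZ$, so $m=2$, but then $p_a(G_0)=1+\tfrac12(G_0^2+G_0\cdot K_F)=\tfrac32\notin\bZ$; hence $a$ has no multiple fibre and every fibre $G_x$ is a reduced curve of arithmetic genus $2$, so $\mathrm{mult}_x(G_x)\le2$ at each of its points.

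Next write $D_F=N+M$, where $N$ is the part supported on the $(-2)$-curves of $F$ — the irreducible components $C$ of $D_F$ with $K_F\cdot C=0$, which are smooth rational and hence vertical for $a$ — and $M$ is the rest, so $M\cdot K_F=D_F\cdot K_F=K_F^2=2$ and $N\cdot G_x=0$ for every fibre $G_x$. Intersecting $M$ with the fibre $G_x$ through $x$ (first removing $G_x$ from $M$ if it occurs there, with coefficient at most $K_F^2/(K_F\cdot G)=1$) and using $\mathrm{mult}_x(G_x)\le2$ bounds $\mathrm{mult}_x(M)\le4$. To bound $\mathrm{mult}_x(N)$, let $\rho\colon F\to\overline F$ contract all $(-2)$-curves onto the canonical model, so $K_F=\rho^*K_{\overline F}$, $K_{\overline F}^2=2$ and $\overline F$ has only rational double points. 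Since $D_F\sim_{\bQ}K_F=\rho^*K_{\overline F}$, the difference $D_F-\rho^*\rho_*D_F$ is $\rho$-exceptional and $\bQ$-linearly trivial, hence zero, so $D_F=\rho^*\overline D$ with $\overline D:=\rho_*D_F\sim_{\bQ}K_{\overline F}$ effective and ample. Adjunction and Hodge-index estimates on the components of $\overline D$, together with $\overline D\cdot K_{\overline F}=2$, bound the local multiplicities of $\overline D$ at the singular points of $\overline F$; feeding these into the (inverse Cartan) linear systems that compute the coefficients of the exceptional curves of $\rho$ in $\rho^*\overline D$ over each rational double point — whose ADE type is itself constrained, since every $(-2)$-curve lies inside an Albanese fibre of arithmetic genus $2$ — bounds each of these coefficients, and since at most two $(-2)$-curves pass through any point of $F$ one gets $\mathrm{mult}_x(N)\le22$. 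Therefore $\mathrm{mult}_x(D_F)=\mathrm{mult}_x(N)+\mathrm{mult}_x(M)\le26$, which finishes the argument.

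The hard part is the last estimate: bounding how concentrated an effective $\bQ$-divisor $\bQ$-linearly equivalent to $K_F$ can be along a chain of $(-2)$-curves. This is precisely what the constant $\tfrac1{26}$ records, and making it rigorous requires going through the possible rational double points of $\overline F$ one by one (their configurations are limited because the $(-2)$-curves sit inside Albanese fibres of arithmetic genus $2$ — alternatively one may quote the explicit classification of minimal surfaces with $K^2=2,\ p_g=q=1$ to pin down $\mathrm{Sing}(\overline F)$) and combining this with the inverse-Cartan bounds. Everything else — the reduction to a multiplicity estimate and the use of the genus-$2$ Albanese fibration — is routine.
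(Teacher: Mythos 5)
Your approach is genuinely different from the paper's, and it has a real gap at the crux of the argument.

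The paper passes to the canonical model $\sigma\colon F\to F_0$, notes $h^0(F_0,2K_{F_0})=3$ and $K_{F_0}^2=2$, and then invokes Koll\'ar's method as packaged in the appendix of \cite{CCJ} (Propositions A.3, A.4, A.7 and the function $\mathrm{mcd}$) to deduce $\mathrm{lct}(F_0;\tfrac12 D_{F_0})\ge\tfrac1{13}$, hence $\mathrm{lct}(F_0;D_{F_0})\ge\tfrac1{26}$; the birational transformation rule then gives the statement on $F$. This is a cohomological bound on the log canonical threshold and never estimates point multiplicities directly.

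You instead try to prove the \emph{a priori stronger} numerical statement $\mathrm{mult}_x(D_F)\le 26$ for every $x$ and every effective $D_F\sim_{\bQ}K_F$, and deduce triviality of $\CJ(F,\mu D_F)$ from the standard criterion $\mathrm{mult}_x<1\Rightarrow$ klt. Note that the paper's lct bound only forces $\mathrm{mult}_x(\tfrac1{26}D_F)\le\dim F=2$, i.e. $\mathrm{mult}_x(D_F)\le 52$; so the bound you aim for is not a consequence of the lemma and would genuinely need a separate proof. The structural observations about the Albanese pencil (relatively minimal, no multiple fibres, reduced genus-$2$ fibres, $(-2)$-curves vertical) are sound, and the bound on the $K_F$-positive part $M$ via $M\cdot G=2$ is in the right spirit, though even there the bookkeeping when the fibre $G_x$ is reducible and shares some but not all components with $M$ needs more care than the sketch allows.

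The genuine gap is the claim $\mathrm{mult}_x(N)\le 22$, where $N$ is the part of $D_F$ supported on $(-2)$-curves. This is precisely what controls the constant $\tfrac1{26}$, and you yourself acknowledge that ``making it rigorous requires going through the possible rational double points of $\overline F$ one by one'' together with inverse-Cartan-matrix estimates. As written, this is an assertion, not an argument: no bound is derived on the ADE types that can occur inside a genus-$2$ fibre with the given invariants, no bound on $\mathrm{mult}_p(\overline D)$ at a singular point $p\in\overline F$ is established (components of $\overline D$ through $p$ could have large degree over the Albanese base relative to their $K$-degree), and the inverse-Cartan coefficients for, say, a long $A_n$-chain grow linearly in $n$, so one would need an explicit bound on the rank of the $(-2)$-configuration from $\rho(F)\le 10$ and from Namikawa--Ueno/Ogg's classification of genus-$2$ fibres. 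None of this is carried out, and it is not clear the effort would terminate at $22$. Until that estimate is actually proven, the proposal does not establish the lemma. If you do want a multiplicity-based proof, I would suggest instead bounding $\mathrm{mult}_p(\overline D)$ on the canonical model at each type of point (using Seshadri-type estimates for the ample $K_{F_0}$ with $K_{F_0}^2=2$) and then converting to $F$ via the discrepancies of $\sigma$, which are all $0$ for Du Val singularities; but this too requires a case analysis you have not supplied, and the paper's route through the cohomological criterion of \cite{CCJ} is shorter and avoids the classification entirely.
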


\begin{proof}
We have $h^0(F, 2K_F)=3$. Let $\sigma: F\rightarrow F_0$ be the contraction onto the canonical model of $F$. Then $K_F=\sigma^*(K_{F_0})$. We denote by $H_0\sim K_{F_0}$ the ample Cartier divisor on $F_0$. By the birational transformation rule (see \cite[Theorem 9.2.33]{Laz}),  it suffices to show that $\mathrm{lct}(F_0; D_{F_0})\geq \frac{1}{26}$ for any $D_{F_0}\sim_{\mathbb Q}H_0$.

We apply Koll\'ar' s method (see the appendix of \cite{CCJ}). Since $H_0^2=2$ and $h^0(F_0, K_{F_0}+H_0)=3$, we have $$3\geq \mathrm{mcd}(\mathrm{lct}(F_0; \frac{1}{2}D_{F_0})),$$ for any $D_{F_0}\sim_{\mathbb Q}H_0$ (see \cite[Proposition A.3 and Remark A.7]{CCJ}). By \cite[Proposition A.4]{CCJ}, $\mathrm{lct}(F_0; \frac{1}{2}D_{F_0})\geq \frac{1}{13}$.
Hence $\mathrm{lct}(F_0; D_{F_0})\geq \frac{1}{26}$
\end{proof}

\section{\bf The proof of Theorem \ref{5K}}

The proof of Theorem \ref{5K} follows the same strategy as that of Theorem \ref{3K}. We will apply some fairly general results to give a simpler proof of Theorem \ref{5K}. The disadvantage is, however,  the lost of control to explicit value of $M(4)$, unlike the $3$-dimensional case.



\begin{defn} Given a birationally bounded set $\mathfrak X$ of smooth projective varieties and a given positive number $c$, we say that a fibration 
$f: X\rightarrow T$ between smooth projective varieties satisfies condition $(B)_{\mathfrak{X}, c}$ if \begin{enumerate}
\item a general fiber $F$ of $f$ is birationally equivalent to an element of $\mathfrak X$;
\item for a general point $t\in T$, there exists an effective $\mathbb Q$-divisor $D_t$ with $D_t\sim_{\mathbb Q}\epsilon K_X$ for some positive rational $\epsilon<c$, such that the fiber $F_t=f^{-1}(t)$ is an irreducible component of $\mathrm{Nklt}(X, D_t)$.
\end{enumerate}
\end{defn}

As we have explained before, a typical birationally bounded set of smooth projective varieties is $$\mathfrak X_{k, M}:=\{X\mid \dim X=k\; \text{and}\; 0<\vol(X)<M\},$$ where $M$ is any postive number and $k$ is a positive integer.

The authors first stated the following theorem in \cite[Theorem 6.8]{CJ17}. Unfortunately the proof has a gap. Following the line of arguments in \cite{Lac}, Wang provided a new proof (see \cite[Theorem 1.2]{W})  and fixed the gap.

\begin{thm}\label{CJW}
Let $n>1$ be an integer. Fix a function $\lambda: \mathbb Z_{>0}\times \mathbb Z_{>0}\rightarrow \mathbb R_{>0}$. There exist integers $M_{n-1}>M_{n-2}>\cdots> M_1>0$
 and a constant $K(\lambda)>0$ such that, for any smooth projective $n$-fold $X$ with $\vol(X)\geq K(\lambda)$, the pluricanonical map $\varphi_{m}$ of $X$ is birational for $m\geq 2$, unless that, after birational modifications, $X$ admits a fibration $f: X\rightarrow Z$ which satisfies $(B)_{\mathfrak X_{k, M_k^k}, \lambda(k, M_k^k)}$ for some integer $k$ with $0<k<n-1$. 
\end{thm}

The following extension theorem proved in \cite{CL24} makes the above theorem quite useful.

 \begin{thm}\label{CL}
Let $n, d $ be two integers with $n > d > 0$ and $\mathfrak X$ a birationally bounded set of smooth projective varieties of dimension $d$. Then there exists a positive number $t_{d, \mathfrak X}$, depending only on $d$ and $\mathfrak X$ , such that the following property holds.
Let $f: X\rightarrow T$ be a fibration between smooth projective varieties, $\dim X=n$, and $\dim T=n-d$.
Assume that the following conditions are satisfied:
\begin{itemize}
\item[(1)] a general fiber $F$ of $f$ is birationally equivalent to an element of ${\mathfrak X}$;
\item[(2)] there exist a positive rational number  $\delta<t_{d,\mathfrak X}$ and an effective $\mathbb Q$-divisor $D\sim_{\mathbb Q}\delta K_X$ such that $F$ is an irreducible non-klt center of $(X, D)$.
\end{itemize}
Then the restriction map
$$H^0(X, pK_X)\rightarrow H^0(F_1, pK_{F_1})\oplus H^0(F_2, pK_{F_2})$$
is surjective for any integer $p\geq 2$ and for any two different general fibers $F_1, F_2$ of $f$.
 \end{thm}

We will consider this problem in a broader way treating varieties with many global $k$-forms. For $1\leq k\leq n$, define \begin{eqnarray*}r_n^{(k)}:=\mathrm{sup}\{r_s(W)\mid \text{W\; is \; a \;smooth \;projective\; n-fold}\;\\\text{ of\; general \;type \;with } \; h^{k,0}(W)>0\}.\end{eqnarray*}
The notation $r_n^+$, mentioned in Section \ref{S1},  is nothing but $r_n^{(n)}$. By convention, if $k>n$, we set $r_n^{(k)}=0$.

Clearly,   $r_n\geq r_n^{(k)}$ for each $1\leq k\leq n$.
Classical results on curves and surfaces imply that  $r_1=r_1^{(1)}=3$ and $r_2^{(1)}=3<r_2^{(2)}=r_2=5$. The main result of \cite{CCCJ} says that $r_3^{(1)}=r_2=5$.

\begin{lem}\label{rnk} For two integers $n$ and $k$ with $n\geq k\geq 1$, we have $r_{n+1}^{(k)}\geq r_{n}^{(k)}$  and $r_{n+1}^{(k+1)}\geq r_{n}^{(k)}$.  \end{lem}
\begin{proof}
For a $n$-fold of general type $Y$ with $H^0(Y, \Omega_Y^{k})\geq 0$, we consider $X=Y\times C$, where $C$ is a smooth projective curve with very ample canonical bundle. Then $H^0(X, \Omega_X^j)>0$ for $j=k, k+1$. Naturally we have  $r_{n+1}^{(k)}\geq r_{n}^{(k)}$  and $r_{n+1}^{(k+1)}\geq r_{n}^{(k)}$.
\end{proof}

\begin{thm}\label{q3} For any $n\geq 4$ and $k\geq 2$, there exists a constant $M(n)$ such that, for every smooth projective $n$-fold $X$ of general type with $h^{k, 0}(X)\geq M(n)$, $|mK_X|$ induces a birational map for each $$m\geq \max\{r_{n-k}, r_{n-1}^{(k-1)}\}.$$
 \end{thm}

\begin{proof}
By Theorem \ref{CL}, we may choose the function $\lambda: \mathbb Z_{>0}\times \mathbb Z_{>0}\rightarrow \mathbb R_{>0}$ such that $\lambda(d, M)=t_{d, \mathfrak X_{d, M}}$ for each integer $d$ with $1\leq d\leq n-1$ and any number $M>0$. Then, by Theorem \ref{CJW}, there exists a constant $K(\lambda)>0$ such that, for any $n$-fold $X$ with $\vol(X)\geq K(\lambda)$, either $r_s(X)\leq 2$ or there are integers $M_{n-1}>\cdots >M_2>M_1>0$ such that, modulo further further birational modifications to $X$, there exists a fibration $f: X\rightarrow T$ which satisfies $(B)_{\mathfrak X_{j, M_j^j}, \lambda(j, M_j^j)}$ for some integer $j$ with $1\leq j\leq n-1$. In the latter case, we may apply Theorem \ref{CL}, which shows that $H^0(pK_X)\rightarrow H^0(F_1, pK_{F_1})\oplus H^0(F_2, pK_{F_2})$ is surjective for any $p\geq 2$. Note that the general fiber $F$ of $f$ has dimension $j$. 

When $j\leq n-k$, naturally, we have $r_j\leq r_{n-k}$ and hence 
$|mK_X|$ induces a birational map for $m\geq r_{n-k}$. 

When $j> n-k$, we claim that  $H^0(F, \Omega_F^{k'})\neq 0$ for some $k+j-n\leq k'\leq k-1$. 
By Koll\'ar's splitting theorem, we have 
\begin{eqnarray*}
h^{k, 0}(X)=h^{n-k}(X, \omega_X)=\sum_{0\leq m\leq n-j} h^m(T, R^{n-k-m}f_*\omega_X).
\end{eqnarray*}
By Lemma \ref{bounded},  $h^{n-j}(T, R^{j-k}f_*\omega_X)$ is upper bounded by $h^{j-k,0}(F)$ and hence there exists an integer $m$ with $0\leq m<n-j$ such that $ R^{n-k-m}f_*\omega_X\neq 0$, which implies that $$h^{j-n+k+m,0}(F)=h^{n-k-m}(F, \omega_F)=\text{rk}(R^{n-k-m}f_*\omega_X)>0.$$ We note that $0<k+j-n\leq k':=j-n+k+m<k$.
Thus, if $h^0(\omega_X^k)\gg0$ and there exists a fibration $f: X\rightarrow T$ which satisfies $(B)_{\mathfrak X_{j, M_j^j}, \lambda(j, M_j^j)}$ for some $n-k<j<n$,  we have $|pK_X|$ induces a birational map for any $p\geq \mathrm{max}\{ r_j^{(k+j-n)},\ldots, r_j^{(k-1)}\}$.
By Lemma \ref{rnk}, a simple  induction implies that
\begin{eqnarray*}
r_{n-1}^{(k-1)}&\geq &\mathrm{max}\{r_{n-2}^{(k-1)}, r_{n-2}^{(k-2)}\}\\
&\geq& \cdots \\
&\geq& \mathrm{max}\{r_{n-k+1}^{(k-1)}, r_{n-k+1}^{(k-2)},\cdots, r_{n-k+1}^{(1)}\}.
\end{eqnarray*}
Hence, under the situation of $j>n-k$,  $|mK_X|$
induces a birational map for $m\geq r_{n-1}^{(k-1)}$. 

To make a summary, there exists a constant $M(n)$ such that, for every smooth projective $n$-fold $X$ of general type with $h^{k, 0}(X)\geq M(n)$, we have  
 $$r_s(X)\leq  \mathrm{max}\{r_{n-k}, r_{n-1}^{(k-1)}\}.$$
\end{proof}

\begin{lem}\label{bounded} Let $f: X\rightarrow T$ be a surjective morphism betweem smooth projective varieties. Assume that $\dim X=n$, $\dim T=m$, and a general fiber of  $f$ is $F$. Then $h^m(T, R^{t}f_*\omega_X)\leq h^t(F, \omega_F)$ for any integer $t\geq 0$.
\end{lem}

\begin{proof}
We run induction on $m$. We may take a very ample divisor $H$ on $T$ and let $T'\in |H|$ be a general member. Let $f': X'\rightarrow T'$ be the hyperplane section of $f$.
Then we have the short exact sequence $$0\rightarrow R^tf_*\omega_X\rightarrow R^tf_*\omega_X\otimes H\rightarrow R^tf'_*\omega_{X'}\rightarrow 0.$$ By Koll\'ar's vanishing, we have $H^{m}(R^tf_*\omega_X\otimes H)=0$. Thus the boundary map $H^{m-1}(R^tf'_*\omega_{X'})\rightarrow H^m(R^tf_*\omega_X) $ is surjective and we conclude by induction.
\end{proof}

\begin{proof}[{\bf Proof of Theorem \ref{5K}}]
since  $r_3^{(1)}=r_2=5$, The statement directly follows as a special case of Theorem \ref{q3} with $n=4$ and $k=2$.
\end{proof}

\begin{Rema} Apart from Theorem \ref{5K}, Theorem \ref{q3} actually suggests more optimal statements. Consider the case with $n=4$ and $k=3$. By Theorem \ref{5K}, we know that $r_3^{(2)}\geq 5>3=r_1$. Assume that a smooth projective $4$-fold $X$ of general type has sufficiently many $3$-forms, Theorem \ref{q3} implies $r_s(X)\leq  r_3^{(2)}$. For a concrete example, take $Y$ to be a $3$-fold of general type with $h^{2,0}(Y)>0$ and $r_s(Y)= r_3^{(2)}$, and take $C$ to be a smooth curve of genus $g\gg 0$. Then $r_s(X)=r_3^{(2)}$. 
\end{Rema}


 Theorem \ref{3K} and Theorem \ref{5K} suggest a new type of lifting principle for $\{r_n\}$ parallel to Chen-Liu \cite[Theorem 1.1]{CL24}.  So we naturally put forward the following: 

 \begin{conj}\label{q2} For any $n\geq 5$, there exists a constant $M(n)$ such that $r_s(X)\leq r_{n-2}$ holds for every smooth projective $n$-fold $X$ of general type with $h^{2, 0}(X)\geq M(n)$. 
 \end{conj}

 \begin{Rema}  Theorem \ref{q3} implies that the answer to Conjecture \ref{q2} is affirmative, if we have a positive answer to the following conjecture, which can be referred to as ``Strong Lifting Principle''.
 \end{Rema}
 
 \begin{conj}\label{q>0} Let $X$ be a smooth projective variety of dimension $n\geq 4$. Assume that $q(X)=h^1(X, \mathcal O_X)>0$. Then $r_s(X)\leq r_{n-1}$.
 \end{conj}

We shall give some evidence to Conjecture \ref{q>0} in the next section.

 \section{\bf Proof of Theorem \ref{1form}}



Pluricanonical systems of varieties admitting holomorphic $1$-forms have been studied by many authors (see, for instance,  \cite{CH01, JLT, CCCJ}). We are inclined to study Conjecture \ref{q>0}. 


When $n=2$, the statement is due to Bombieri \cite{Bom73}; when $n=3$, the affirmative answer to Conjecture \ref{q>0} was recently given in Chen-Chen-Chen-Jiang \cite{CCCJ}.

We have here a partial answer to this question in any dimension as follows:

\begin{thm}\label{1form} Let $X$ be a smooth projective variety of general type of dimension $n\geq 4$. Assume that either $q(X)>n$ or the Albanese image of $X$ is a proper subvariety of the Albanese variety. Then $|mK_X|$ induces a birational map for all $m\geq r_{n-1}$.
 \end{thm}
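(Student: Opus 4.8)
The plan is to reduce the statement to lower-dimensional pluricanonical results via the Albanese morphism, exactly in the spirit of Koll\'ar's fibration method and the inductive arguments of \cite{CCCJ}. Let $a_X: X\rightarrow A_X$ be the Albanese morphism and take its Stein factorization $a_X: X\xrightarrow{h} Z\xrightarrow{g} A_X$, with $h$ a fibration onto a smooth projective variety $Z$ (after birational modification) and $g$ generically finite onto $g(Z)=a_X(X)$. Under either hypothesis -- $q(X)>n$, or the Albanese image being a proper subvariety -- the image $g(Z)$ has dimension $m:=\dim Z$ satisfying $\dim A_X>m$ (if $q(X)>n\ge m$, then automatically $\dim A_X=q(X)>n\ge m$; if the Albanese image is proper, then $\dim A_X>\dim a_X(X)=m$). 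In particular $g(Z)$ generates $A_X$ but $g(Z)\subsetneqq A_X$, so Lemma \ref{techinical} applies.

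First I would apply Lemma \ref{techinical} to $g: Z\rightarrow A_X$: there is a quotient $q_B: A_X\rightarrow B$ of abelian varieties with connected fibers such that, taking the Stein factorization $Z\xrightarrow{q_Z} Z_B\xrightarrow{f_B} B$ of $q_B\circ g$, any smooth model $Z_B'$ of $Z_B$ is of general type, with $f_B(Z_B')$ generating $B$ and $f_B(Z_B')\subsetneqq B$. Composing with $h$ gives a fibration $h_B: X\rightarrow Z_B'$ (after further blow-ups) onto a positive-dimensional variety of general type $W:=Z_B'$, with $1\le d:=\dim W\le m\le n-1$. Let $F$ be a general fiber of $h_B$; then $F$ is a smooth projective variety of general type of dimension $n-d\le n-1$. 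The key point is that $W$ carries a generically finite map to an abelian variety whose image is not a translate of a subgroup, so $W$ is in particular of general type and -- more importantly -- we can run the usual "one step of the fibration method" on $h_B$.

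The main body of the argument is then a Koll\'ar-type surjectivity/extension statement: for $m\ge r_{n-1}$, I would show the restriction $H^0(X, mK_X)\rightarrow H^0(F, mK_F)$ is surjective for general $F$, and separately that $|mK_X|$ distinguishes points of $W$ (i.e. separates fibers) and is generically finite on each fiber, so that birationality of $|mK_W|$ together with birationality of $|mK_F|$ yields birationality of $|mK_X|$. Since $\dim F\le n-1$ and $\dim W\le n-1$, birationality of $|mK_F|$ and $|mK_W|$ for $m\ge r_{n-1}$ is automatic by the definition of $r_{n-1}$. The separation of distinct general fibers of $h_B$ by $|mK_X|$ follows from the standard argument (as in the $\dim V_1=3$, $\pi$ birational case of Theorem \ref{5K}, or \cite[Theorem 3.4]{CJ17}): since $W$ is of general type, $K_X\ge_{\bQ} h_B^*K_W$ up to an $\epsilon$-perturbation by weak positivity (Viehweg), $K_W$ separates general points of $W$, and one cuts non-klt centers on $W$ pulled back to $X$. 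The surjectivity onto $H^0(F,mK_F)$ is handled by the extension machinery: one produces, for a general point $w\in W$, an effective $\bQ$-divisor singular enough that $\{w\}$ becomes an isolated non-klt center (using that $W$ is of general type of dimension $\le n-1$, so $K_W$ is big and an $r_{n-1}$-fold multiple already separates points), pulls it back, adds an $\epsilon$-multiple of a Fujita-type decomposition $K_X\sim_{\bQ}A+E$, and applies Nadel vanishing exactly as in Subsection \ref{laststep} to lift sections from $F$.

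The hard part will be the interface between the abelian-variety input (Lemma \ref{techinical}, which only guarantees $\chi(\omega_W)\ge (q(X)-n)/n>0$ and general type, but says nothing quantitative about the geometry of $|mK_W|$ beyond what $r_{n-1}$ gives) and the cutting/extension step: one must ensure that the $\bQ$-divisor on $W$ used to create an isolated lc center at a general point can be taken with small enough coefficient that, after pulling back and combining with the non-klt center producing $F$, the total boundary still has coefficient $<1$ in the relevant sense and Nadel vanishing applies with twist $mK_X$ for the uniform bound $m\ge r_{n-1}$ -- rather than something like $r_{n-1}+\text{const}$. This is precisely where one needs the sharp fact (true by definition of $r_{n-1}$ and the birational boundedness results of \cite{H-M06, Tak06, T07}) that for a general type variety of dimension $\le n-1$, $|r_{n-1}K|$ is birational; the bookkeeping to propagate this cleanly through the two-step reduction (first to $F$, then to $W$, then assembling) is the delicate point, and it is here that the hypothesis $q(X)>n$ (ensuring $\dim W\le n-1$ and $W$ of general type, hence the induction genuinely drops dimension) is essential.
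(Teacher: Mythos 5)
Your overall strategy -- reduce via the Albanese morphism to a fibration $h_B\colon X\to W$ with $W$ of general type and positive-dimensional, then handle separation of fibers and surjectivity of the restriction to a general fiber -- is the right skeleton, and you have correctly identified the crux: getting the \emph{sharp} bound $m\ge r_{n-1}$ rather than $r_{n-1}+\mathrm{const}$. But the proposal does not resolve that crux, and the resolution in the paper hinges on two ingredients you haven't used.

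First, the paper does not use Lemma \ref{techinical} here but Ueno's theorem, to get $Z$ (the base of the fibration) of \emph{maximal Albanese dimension}. This is not a cosmetic difference: for such $Z$ one has (i) $|3K_Z|$ birational (Chen--Hacon, Jiang--Lahoz--Tirabassi), and (ii) a \emph{generically finite} canonical map $\phi\colon Z\dashrightarrow \bP^N$ (Griffiths--Harris), so that pulling back $n_1=\dim Z$ general hyperplanes through a general point $\phi(z)$ gives a $\bQ$-divisor $\sim_{\bQ} n_1 K_Z$ making $z$ an lc center. Your proposal only records that $W$ is of general type and invokes "$|r_{n-1}K_W|$ is birational"; this gives an lc center at a general point of $W$ only with coefficient on the order of $r_{n-1}$ (or dependent on $\mathrm{vol}(W)$, which is not uniformly bounded below), whence after Nadel vanishing you land at something like $m\gtrsim 2 r_{n-1}$, not $r_{n-1}$. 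The bound-sensitive step requires coefficient $\le n_1\le n-1$, and that comes precisely from the generically finite canonical map, i.e.\ from maximal Albanese dimension -- a fact you note in passing but never exploit.

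Second, for separation of general points in different fibers, the paper does not argue via weak positivity plus cutting non-klt centers on $W$. Instead it writes $mK_X-3h^*K_Z=K_X+(m-1)K_{X/Z}+(m-4)h^*K_Z$, observes via Viehweg weak positivity and Koll\'ar vanishing that the pushforward to the abelian variety $B$ of the twisted multiplier-ideal sheaf is IT$^0$, hence has a nonzero section, so $mK_X-3h^*K_Z$ is effective, and then uses $|3K_Z|$ birational. This IT$^0$ trick is much tighter than the $\epsilon$-perturbation/cutting approach you sketch, and is what makes $m\ge 5$ suffice for this half of the argument. Your sketch, as written, would need a quantitative version of "non-klt cutting on $W$" whose constants you have not controlled; indeed, for a $W$ of arbitrary general type of dimension $\le n-1$ the coefficient needed to produce isolated non-klt centers at very general points cannot be bounded independently of $W$ (it degrades as $\mathrm{vol}(W)\to \nu_{\dim W}$), so this route genuinely fails to close. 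Finally, the restriction-surjectivity step in the paper is routed through Theorem \ref{extension1}, whose hypothesis~(2) is supplied exactly by the generically finite canonical map of $Z$; a direct Nadel-vanishing argument of the kind you propose would need, but does not have, that input.
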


 \begin{proof}
 Let $a_X: X\rightarrow A_X$ be the Albanese morphism of $X$. By assumption, $a_X(X)\subsetneqq A_X$ generates $A_X$. By Ueno's theorem (see for instance \cite[Theorem 3.7]{M85}), there exists a fibration $q_B: A_X\rightarrow B$ between abelian varieties such that any smooth model of $q_B\circ a_X(X)$ is of general type. Let $X\xrightarrow{h} Z\xrightarrow{t} q_B\circ a_X(X)$ be the Stein factorization of $q_B\circ a_X$. After birational modifications, we may assume that $Z$ is a smooth projective variety.   We have the following commutative diagram
 \begin{eqnarray*}
 \xymatrix{X\ar[r]^{a_X}\ar[d]^h & A_X\ar[d]^{q_B}\\
 Z\ar[r]^t &B.
 }
 \end{eqnarray*}
 Note that $Z$ is of maximal Albanese dimension. We denote by $n_1=\dim Z$. We know that $|mK_Z|$ induces a birational map of $Z$ for each $m\geq 3$ (see \cite{CH01, JLT}). Let $F$ be a general fiber of $h$. Then $0\leq \dim F=n-n_1\leq n-1$.
 Let $a$ be the canonical stability index of $F$. Then  $1\leq a\leq r_{n-n_1}\leq r_{n-1}$.

 We first show that $|mK_X|$ separates two general points on different fibers of $h$ for each $m\geq \max\{5, a\}$. Because $|3K_Z|$ induces a birational map, it suffices to show that  $mK_X-3h^*(K_Z)$ is effective. We write $mK_X-3h^*(K_Z)=K_X+(m-1)K_{X/Z}+(m-4)h^*(K_Z)$.
Note that, by Viehweg's weak positivity, the Iitaka model of $(m-1)K_{X/Z}+(m-4)h^*(K_Z)$ dominates $Z$. Let $D=(m-1)K_{X/Z}+(m-4)h^*(K_Z)$. We apply once again Viehweg's weak positivity  with the generic restriction theorem (see \cite[Theorem 11.2.8]{Laz}) to conclude that $\CJ(||D||)|_F=\CJ(||(m-1)K_F||)$. Thus $$ h_*\big(\OO_X(mK_X-3h^*(K_Z))\otimes \CJ(||D||)\big)$$ is a torsion-free sheaf on $Z$ of rank equal to $h^0(F, \OO_F(mK_F)\otimes \CJ(||(m-1)K_F||))=p_m(F)>0$.
By Koll\'ar's vanishing, $$ H^i(Z, h_*\big(\OO_X(mK_X-3h^*(K_Z))\otimes \CJ(||D||)\big)\otimes t^*P)=0 $$ for each $i\geq 1$ and $P\in\Pic^0(B)$ (see also \cite[Lemma 2.3]{J11}). Thus, $$t_*h_*\big(\OO_X(mK_X-3h^*(K_Z))\otimes \CJ(||D||)\big)$$ is IT$^0$ on $B$ and, consequently, it has a non-zero global section. Thus $mK_X-3h^*(K_Z)$ is effective.

 We then show that the restriction map $H^0(X, mK_X)\rightarrow H^0(F, mK_F)$ is surjective for $m\geq n_1+2$.
 
 Since a smooth model of $t(Z)$ is of general type, by a result of Griffiths and Harris (see, for instance, \cite[Theorem 3.9]{M85}), the canonical map of a smooth model of $t(Z)$ is generically finite. Thus the canonical map of $Z$ is also generically finite. Let $\phi: Z\dashrightarrow Z_1\subset \mathbb P^N$ be the canonical map of $Z$. Let $z\in Z$ be a general point such that $\phi$ induces an \'etale map between an open neighborhood of $z$ with an open neighborhood of $\phi(z)\in Z'$. Let $H_1, \ldots, H_{n_1}$ be $n_1$ general hyperplane of $\mathbb P^N$ through $\phi(z)$. Then, $(Z', \sum_{i=1}^{n_1}H_i|_{Z'})$ is log canonical in an open neighborhood of $\phi(z)$ and $\phi(z)$ is a log canonical center of the pair. Let $D_i\sim K_Z$ be the effective divisor corresponding to $H_i$ for $1\leq i\leq n_1$. Thus $(Z, \sum_{i=1}^{n_1}D_i)$ is log canonical in an open neighborhood of $z$ and $z$ is a log canonical center of this pair.
 
 Note that the divisor $(m-1)K_{X/Z}+(m-1-n_1)h^*(K_Z)$ is big. After birational modifications of $X$, we may assume that, for some sufficiently large and divisible integer $N$, 
 $$|N((m-1)K_{X/Z}+(m-1-n_1)h^*(K_Z))|=|L|+E,$$ where the moving part $|L|$ is big and base-point-free and the fixed part $E$ has SNC support. We may write $E=E_1+E_2$, where $E_1$ is the $h$-horizontal part of $E$ while $E_2$ is $h$-vertical. By Viehweg's weak positivity of $h_*\omega_{X/Z}^t$ for each $t\geq 1$, we may assume that the restriction map  $$H^0(X,N((m-1)K_{X/Z}+(m-1-n_1)h^*(K_Z)))\rightarrow H^0(F, N(m-1)K_F)$$ is surjective for a general fiber $F$ of $h$. In particular, $$|N(m-1)K_F|=|L||_F+E_1|_F.$$

Let $F=h^{-1}(z)$. By generic smoothness, we may assume that $E_1$ has SNC support over an open neighborhood of $z$. Since $E_2$ does not meet $F$, it is clear that the multiplier ideal $$\mathcal J:=\mathcal J(X, h^*(\sum_{i=1}^{n_1}D_i)+\{\frac{E_1}{N}\}+\frac{E_2}{N})$$ defines a subscheme, which contains $F$ as a connected component. We then consider the sheaf $\mathcal O_X(mK_X-\lfloor \frac{E}{N} \rfloor)\otimes \mathcal J$. Since 
\begin{eqnarray*}&&mK_X-\lfloor \frac{E_1}{N} \rfloor \\&=& K_X+(m-1)K_{X/Z}+(m-1-n_1)h^*(K_Z)+n_1h^*(K_Z)-\lfloor \frac{E_1}{N} \rfloor\\
&\sim_{\mathbb Q}& K_X+\frac{1}{N}L+h^*(\sum_{i=1}^{n_1}D_i)+\{\frac{E_1}{N}\}+\frac{E_2}{N},
\end{eqnarray*}
 
we conclude  by Nadel vanishing that $$H^i(X, \mathcal O_X(mK_X-\lfloor \frac{E_1}{N} \rfloor)\otimes \mathcal J)=0$$ for $i\geq 1$. Thus the restriction map  $$H^0(X, \mathcal O_X(mK_X-\lfloor \frac{E_1}{N} \rfloor))\rightarrow H^0(F, mK_F-\lfloor \frac{E_1}{N} \rfloor|_F)$$ is surjective. It is also clear that $\lfloor \frac{E_1}{N} \rfloor|_F$ is a sub-divisor of the base divisor of $|mK_F|$. Thus $$H^0(mK_X)\rightarrow H^0(mK_F)$$ is surjective for $m\geq n_1+2$. 

 We then see that $|mK_X|$ induces a birational map for each $m\geq \max\{n_1+2, r_{n-1}\}$. It suffices to see that $r_{n-1}\geq n_1+2$. It is well known that $r_5\geq r_4\geq r_3\geq 27$. When $n\geq 7$, by \cite[Theorem 1.1]{ETW}, $r_{n-1}\geq 2^{2^{\frac{n-3}{2}}}>n+2$.
 \end{proof}

\bigskip

\noindent{\bf Acknowledgment.} The first author is a member of the Key Laboratory of Mathematics for Nonlinear Science, Fudan University. Both authors thank Jungkai Chen, Lie Fu, Yong Hu, Chen Jiang, Zhiyu Tian and Kang Zuo for fruitful discussions.

\end{document}